\long\def\killtext#1{}
\newtheorem{theorem}{Theorem}[section]
\newtheorem{prop}[theorem]{Proposition}
\newtheorem{lemma}[theorem]{Lemma}
\newtheorem{claim}[theorem]{Claim}
\newtheorem{exa}[theorem]{Example}
\newenvironment{proof}{\noindent{\bf Proof.}}{\hfill$\square$\medskip}
\newenvironment{proof*}[1]{\noindent{\bf Proof of #1.}}{\hfill$\square$\medskip}
\def\E{{\sf E}}\def\Var{{\sf Var}}
\def\R{\Rbb}
\def\E{\mathrm{E}}
\def\vol{\mathrm{vol}}
\def\TV{\text{\tiny{TV}}}
\def\Rbb{\mathbb{R}}
\def\vol{\mathrm{vol}}
\def\unif{\mathrm{unif}}
\begin{document}

\title{\Large\bf Stochastic billiards for sampling from the boundary of a convex set}

\author{A. B. Dieker, Santosh S. Vempala}
\maketitle

\begin{abstract}
Stochastic billiards can be used for approximate sampling from the boundary of a bounded convex set through the Markov Chain Monte Carlo (MCMC) paradigm.
This paper studies how many steps of the underlying Markov chain are required to get samples (approximately) from
the uniform distribution on the boundary of the set, for sets with an upper bound on the curvature of the boundary.
Our main theorem implies a polynomial-time algorithm for sampling from the boundary of such sets. 
\end{abstract}

\section{Introduction}
Sampling from high-dimensional objects is a fundamental algorithmic task with many applications to central problems in operations research and computer science. As with optimization, sampling is algorithmically tractable for convex bodies \cite{DyerFK89,lovaszsimonovits:1993,LV3} and their extension to logconcave densities \cite{ApplegateK91,LV06}, using rapidly-mixing Markov chains whose state space is the body of interest. 

\paragraph{Sampling from the boundary of a convex set.}
This paper discusses the problem of sampling from the {\em boundary} of a convex body.
There are several reasons that warrant a detailed inquiry into such sampling algorithms:

\begin{enumerate}
\item Sampling from the boundary of a convex set generalizes sampling from a convex set $K$, 
since samples from $K$ can be generated by sampling from the boundary of the set $K\times [0,1]$ in $\R^{n+1}$. 
This yields samples from $K\times \{0,1\}$ but also from $\partial K\times [0,1]$, 
and the probability of getting a point in $K\times \{0,1\}$ is at least $2/n$ if $K$ contains a unit ball.



\item There are specific applications for sampling from the boundary of a convex set, see \cite{MR1110353}.
Our own interest is triggered by an application to the theory of ranking and selection \cite{diekerkim2014}, which aims to identify the
best performing system among a group of systems when only noisy observations are available on their performance.
For applications of the more general problem of sampling from manifolds, we refer to 
\cite{MR3124690,diaconisholmesshahshahani:manifold2013,MR2538813}.

\item MCMC algorithms that exploit the boundary could prove to be faster. 
The theoretical results given in this paper do not give better worst-case bounds, but they could lead to an implementation that is faster in practice.
\item Natural Markov chains in this setting can be viewed as stochastic variants of billiards, a classical topic in chaos theory. Thus, stochastic billards could be viewed as a bridge between chaos theory and MCMC algorithms, and there may be some potential for chaos theory to be applied in the context of (approximate) sampling algorithms for convex bodies.
\item New tools need to be developed, which are potentially useful in various other settings. Key hurdles arise due to the fact that we work with curved spaces.
\end{enumerate}

Existing sampling algorithms for convex sets can be adapted to be used as
alternatives to  algorithms that directly sample from the boundary of convex sets.
However, such algorithms are computationally more expensive 
since one needs many samples from the convex body to get one approximate sample from the boundary \cite{MR3124690,MR1616544,MR2538813}.

\paragraph{Stochastic billards.}
Shake-and-bake algorithms \cite{MR1139964,MR1615562} 
have been proposed for sampling from the boundary of a compact set through the MCMC paradigm.
Underlying each of these algorithms is a Markov chain, whose state space is the 
boundary of a convex body and whose stationary distribution is the target distribution 
one seeks samples from. After running the Markov chain for a while, the distribution
of the chain becomes `close' to the target distribution and one thus obtains an approximate
sample from the target distribution.
Clearly, the efficiency of these (and any) MCMC algorithms critically depends on how long the Markov
chain will need to be run to get close to the target distribution.

The stochastic billiard algorithm we study in this paper is a special 
shake-and-bake algorithm (`running shake-and-bake'), and can informally be described as follows.
It traces a ball bouncing inside a set; when the ball hits the boundary of the domain, it is sent in a random direction according to a cosine distribution, depending only on the normal to the tangent at the point of contact with the boundary and {\em not depending} on the incoming direction. 
The Markov chain of hitting points on the boundary has a uniform stationary distribution.
The motivation for this paper is to understand the convergence properties of stochastic billiards, i.e.,
for how many steps this Markov chain has to be run as a function of the body.

The main contribution of this paper is the first (to our knowledge) rapid mixing guarantee for a Markov chain supported by the boundary of a convex body.
For such bodies with bounded curvature, the main theorem implies a polynomial-time algorithm for sampling from a distribution arbitrarily close to uniform on the boundary. We emphasize that the guarantee is polynomial in the dimension for any body in this class. 
This in turn has applications, including estimating the surface area. As we note later, our mixing time bound is asymptotically the best possible.

The worst-case bound derived here is a first theoretical step towards using stochastic billiards for sampling algorithms, 
but a good implementation requires an understanding of several issues of utmost practical significance. 
Chief among these is the question how to sample from a `good' initial distribution, which itself may require running MCMC multiple times with different dynamics.
Another related key question is how to sample from more general densities, without getting trapped by too many rejections of the Metropolis filter.
The recent paper \cite{cousinsvempala} sheds some light on these issues in the context of volume computations.


\paragraph{Related work.}
Hit-and-run is a random walk in a convex body (not its boundary) \cite{Boneh79,MR775260}. Its convergence was first analyzed by Lov\'asz \cite{L2}, who showed that it mixes rapidly from a {\em warm} start, i.e., a distribution close to the stationary. Later, it was shown to be rapidly mixing from any starting point for general logconcave densities \cite{LV3,LV2}. It is similar to stochastic billiards in that each step uses a randomly chosen line through the current point. 
In these and other works on MCMC sampling for convex bodies, the required number of steps for approximate convergence of the Markov chain (`mixing time') only depends on the dimension $n$ of the body $K$ and its diameter $D=\sup\{|x-y| : x,y\in K\}$.

At a high level, our main proof of convergence is similar to previous work. It is based on bounding the {\em conductance} of the Markov chain. This is done via an isoperimetric inequality and an analysis of single steps of the chain (see, e.g., the survey \cite{VemSurvey} on geometric random walks). 
Beyond this high-level outline however, our analysis departs from the standard route and from that of hit-and-run. The isoperimetric inequality we need is for curved spaces, unlike most inequalities in the literature on sampling, which are for convex bodies or logconcave functions. 
The main challenge for proving rapid convergence comes in the analysis of single steps, which is significantly more intricate for stochastic billiards than for hit-and-run.
Here we have to show two things. First that ``proper" steps are substantial and not infrequent; and second that the one-step distributions from two {\em nearby} points have significant overlap. The latter proof has to take into account the specific geometry of stochastic billiards and the cosine law. 

There is a significant body of work on convergence properties of stochastic billiards with a focus 
on establishing geometric ergodicity of the Markov chain, i.e., exponentially fast convergence to the stationary distribution
\cite{%
MR2481068,
MR1843052, 
MR921821} 
on the body, e.g., through the dimension $n$ and diameter $D$ of the body.
It is this dependence that is of paramount importance from an algorithmic point of view,
since this convergence rate determines if the resulting algorithm is polynomial-time or not.

\paragraph{Bounded curvature.}
All mixing time results for sampling from a convex body $K$ rely on the assumption that $K$ is contained in a ball with diameter $D$ and that it contains a ball with diameter $d$. The mixing time then depends on the ratio $D/d$. In this paper, we work under the assumption that the boundary of $K$ has curvature bounded by $\cal C$, which is implies that the body contains a ball with radius $1/{\cal C}$. Thus, in our case, the mixing time depends on the product ${\cal C} D$.

In view of the above, it is natural to ask whether our assumption of bounded curvature could be relaxed to the usual ball-containment condition. 
We find it plausible that this can be done, but we believe this requires new insights on isoperimetric inequalities.
The assumption of bounded curvature allows us to derive a pointwise lower bound on the step size (Lemma~\ref{lem:curvature}), 
which is one of the key ingredients for our mixing time bound. With this pointwise bound, we are able to leverage 
existing (recent) results on isoperimetric inequalities on curved spaces. 

The interest in MCMC algorithms for sampling and volume computations is driven by the fact
that deterministic approximations of the volume of convex bodies are computationally intractable \cite{MR911186}, even under the assumption
of bounded curvature, while 
MCMC yields polynomial-time approximations for volumes of arbitrary convex bodies and measures under logconcave distributions. Such approximations
are also of interest for manifolds, and our work is motivated by sampling and thereby estimating relative measures of boundary subsets. 

\paragraph{Notation.}
Throughout this paper, we study smooth manifolds with a Riemannian metric so that the notions of curvature, Borel sets, and volume are well-defined.
We specifically focus on boundaries of convex bodies. 
We say that the boundary $\partial K$ of a convex body $K$ has {\em curvature bounded from above by ${\cal C}<\infty$} if
for each $x\in\partial K$, there is a ball $B$ with radius $1/{\cal C}$ and center in $K$ so that the tangent planes of $K$ and $B$ at $x$ coincide and $B$ lies in $K$.
For a Borel subset $A\subset \partial K$, we write $\vol(A)$ for its volume (which is actually a surface area).

We write $\Psi$ for the tail of the standard Gaussian distribution, i.e., $\Psi(x)=\int_x^\infty \frac{1}{\sqrt{2\pi}} e^{-y^2/2} dy$, and $\Psi^{-1}$ for its inverse.

The {\em total variation distance} between measures $P$ and $Q$ on $\partial K$ is defined as
\[
\|P-Q\|_\TV = \sup_{A\subseteq \partial K} |P(A)- Q(A)|.
\]
(Here and elsewhere, the necessary measurability assumptions are implicit.)

We write $B^n$ and $S^{n-1}\subseteq \R^n$ for the unit ball and unit sphere in $\R^n$, respectively.
For $x\in\partial K$, let $S_x$ be the unit sphere centered at $x$,
and $n_x$ the inward normal at $x$.
Write $P_x^{\cos}$ for the law with density proportional to $n_x\cdot(y-x)$ on the halfsphere $\{y\in S_x: n_x\cdot (y-x) \ge 0\}$, and $P_x^{\unif}$ for the law with the uniform density on this halfsphere. 
We refer to this law as the cosine law, since the density is proportional to $\cos(\phi_{xy})$, where
where we write $\phi_{xy}$ for the acute angle between $x-y$ and $n_x$, where $x,y\in\partial K$,
see Figure~\ref{fig:SB} below.
We also need two-sided versions:
$\tilde P_x^{\cos}$ is the law on $S_x$ with density proportional to $|n_x\cdot (y-x)|$, and 
$\tilde P_x^{\unif}$ is the uniform distribution on $S_x$.

\section{Stochastic billiards and our main results}
This section describes the stochastic billiards we study in this paper, and presents our main results.

\vspace{3mm}
\begin{mdframed}[style=MyFrame]
{\bf Stochastic billiard on $\partial K$}
\begin{enumerate}
\item Assuming the current state is $x\in\partial K$, sample $w\in S_x$ from the law $P_x^{\cos}$.
\item The next state is the unique intersection point $y\neq x$ of the line $\{x+tw:t\in\R\}$ with $\partial K$.
\item Repeat.
\end{enumerate}
\end{mdframed}

Note that the intersection point always exists and is unique by our assumption on the curvature.
Figure~\ref{fig:SB} illustrates the dynamics.
Efficiently sampling from $P_x^{\cos}$ can be done by first 
sampling uniformly from the $(n-1)$-dimensional unit ball centered at $x$ in the tangent plane at $x$,
and then projecting on $S_x$ in the direction of $n_x$; see \cite{MR1139964,MR1110353}.

\begin{figure}[htp]
\begin{center}
\includegraphics[height=60mm]{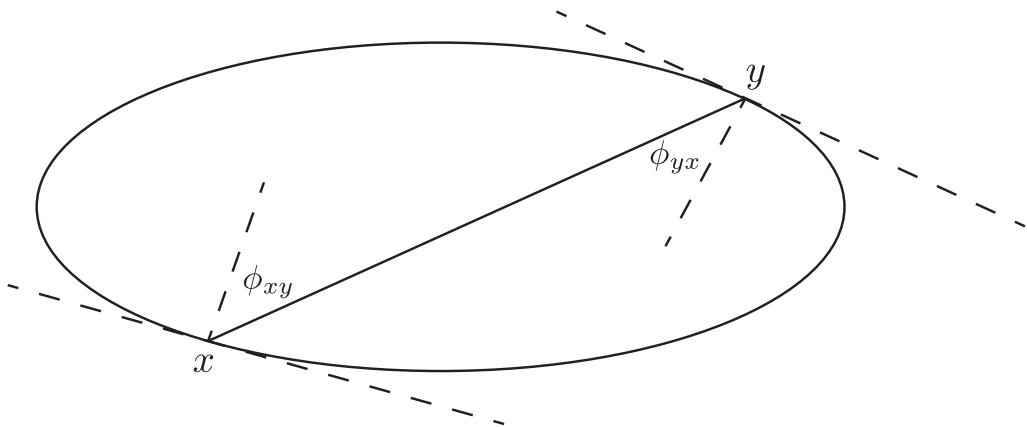}
\end{center}
\caption{The stochastic billiard moves from $x$ to $y$. The inward normals and the tangent planes at $x$ and $y$ are dashed.}
\label{fig:SB}
\end{figure}

The subsequent intersection points $\{X_k\}$ form a random sequence on $\partial K$, and it is immediate that this is a Markov chain. We call it the {\em stochastic billiard Markov chain}.
The one-step distribution of the Markov chain is, for $u\in\partial K$ and $A\subseteq \partial K$, \cite{MR1139964,MR1615562}
 \begin{equation}\label{one-step-formula}
P_u(A) = \frac{\pi^{(n-1)/2}}{\vol(\partial K) \Gamma((n+1)/2)}\int_A \frac{\cos(\phi_{uv})\cos(\phi_{vu})}{\|u-v\|^{n-1}} dv,
\end{equation}
where $\Gamma$ is the gamma function.

It is worthwhile to understand why the cosine distribution is a natural choice for
the outgoing direction in the stochastic billiard chain, since this is closely related
to the fact that the uniform distribution is stationary for the chain.
As can be seen in Figure~\ref{fig:incidence_angle},
the more oblique the incidence of a bundle, its mass must be `spread out' over a larger region.
It is readily seen that this effect is proportional to the cosine of the incoming angle $\phi$ (as defined previously with respect to the normal).
As a result, the two cosines appearing in the transition density make the kernel symmetric and consequently
the uniform distribution is stationary.
Thus, the next lemma forms the starting point of MCMC algorithms 
for sampling from the uniform distribution $\pi$ on $\partial K$, 
We refer to \cite{MR1139964,MR1110353} for proofs of this lemma.
\begin{figure}[htp]
\begin{center}
\includegraphics[height=50mm]{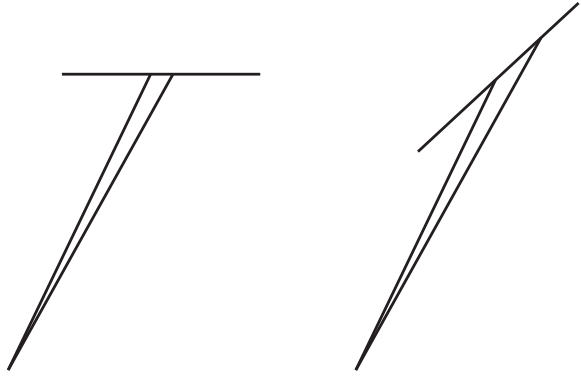}
\end{center}
\caption{The same bundle with different incidence angles.}
\label{fig:incidence_angle}
\end{figure}

\begin{lemma}
The uniform distribution $\pi$ on $\partial K$ is stationary for the stochastic billiard Markov chain.
Moreover, for any initial state $x\in\partial K$, we have
\[
\lim_{k\to\infty} P(X_k\in B|X_0=x) = \pi(B).
\]
\end{lemma}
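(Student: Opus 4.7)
The lemma contains two distinct assertions, which I would address separately. For the first (stationarity), I would read off from \eqref{one-step-formula} that the transition kernel has density
\[
k(u,v) \;=\; \frac{\pi^{(n-1)/2}}{\vol(\partial K)\,\Gamma((n+1)/2)}\,\frac{\cos(\phi_{uv})\cos(\phi_{vu})}{\|u-v\|^{n-1}}
\]
with respect to surface measure on $\partial K$, and observe that $k(u,v) = k(v,u)$, since swapping $u$ and $v$ exchanges the two cosines while leaving $\|u-v\|$ fixed. Combined with the fact that $\pi$ has constant density $1/\vol(\partial K)$, this symmetry is exactly detailed balance: $d\pi(u)\,P_u(dv) = d\pi(v)\,P_v(du)$. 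Integrating in $u$ and using $\int k(v,u)\,du = 1$ then yields $\int P_u(A)\,d\pi(u) = \pi(A)$ for every Borel $A\subseteq\partial K$, i.e.\ stationarity.

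For the second assertion (pointwise convergence from every initial state), I would invoke the standard convergence theorem for aperiodic positive Harris chains. Three ingredients need to be verified. \emph{$\pi$-irreducibility:} the density $k(u,v)$ is strictly positive except when $u$ and $v$ lie on a common tangent of $\partial K$, a set of surface measure zero, so $P_u$ assigns positive mass to every set of positive $\pi$-measure. \emph{Aperiodicity:} since $P_u$ itself assigns positive mass to every $\pi$-positive set, no cyclic decomposition of the state space is possible. \emph{Harris recurrence:} I would obtain this most cleanly via a Doeblin-type minorization, by showing that the two-step density $k^{(2)}(u,v) := \int k(u,w)\,k(w,v)\,dw$ is strictly positive and continuous on $\partial K\times\partial K$, hence uniformly bounded below by some $c>0$ on this compact product space, yielding $P_u^{2}(\cdot) \ge c\,\pi(\cdot)$ for every $u$. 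The aperiodic Harris convergence theorem (in the style of Meyn--Tweedie) then gives $\|P_u^k(\cdot)-\pi\|_\TV \to 0$ for every $u\in\partial K$, which in particular implies the stated pointwise convergence $P(X_k\in B\mid X_0=x)\to\pi(B)$.

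The step requiring the most care is the uniform positivity of $k^{(2)}$. Individually $k(u,\cdot)$ can vanish along tangential directions, so one must rule out the possibility that $k(u,w)k(w,v)$ vanishes for almost every intermediate $w$. This is where the bounded-curvature hypothesis enters: for any fixed pair $(u,v)$ it guarantees an open set of intermediate points $w$ for which both chords $uw$ and $wv$ meet $\partial K$ transversally, and continuity of $k^{(2)}$ on the compact set $\partial K\times\partial K$ then upgrades pointwise positivity to a uniform lower bound. Only qualitative positivity is needed for the present lemma; quantitative control, in the form of a mixing-time bound, is deferred to the main theorems of the paper.
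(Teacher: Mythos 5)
The paper does not actually prove this lemma; it states it and refers to \cite{MR1139964,MR1110353} for the proof, so there is no in-paper argument to compare against. With that caveat, your proposal is a reasonable self-contained route, and the stationarity half is correct: symmetry of the kernel density in $u,v$ is immediate from \eqref{one-step-formula}, and combined with the constant density of $\pi$ this gives detailed balance and hence invariance.

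The convergence half has a genuine gap in the minorization step. You assert that $k^{(2)}(u,v)=\int k(u,w)\,k(w,v)\,dw$ is \emph{continuous} on $\partial K\times\partial K$ and use compactness to extract a uniform lower bound. But the one-step density has a diagonal singularity: writing the boundary locally as a graph over the tangent plane at $u$ with curvature bounded by $\CC$, one finds $\cos\phi_{uw},\cos\phi_{wu}=\Theta(\|u-w\|)$, so $k(u,w)\asymp \|u-w\|^{-(n-3)}$ near $w=u$. Consequently $k(u,\cdot)\notin L^2$ near $u$ once $n\ge 5$, which makes $k^{(2)}(u,u)=\int k(u,w)^2\,dw=+\infty$; the function $k^{(2)}$ is therefore \emph{not} continuous on the diagonal for $n\ge 5$, and the compactness argument as stated does not go through. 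The argument is salvageable: $k\ge 0$ is continuous off the diagonal, so by Fatou's lemma $k^{(2)}$ is \emph{lower} semicontinuous with values in $(0,\infty]$, and a strictly positive lower semicontinuous function on a compact set attains a positive infimum. You should make this substitution explicit, since it is precisely what rescues the step. Two smaller points: your one-step $\pi$-irreducibility claim (that $\{v:k(u,v)=0\}$ is always $\pi$-null) can fail if $\partial K$ contains an $(n-1)$-dimensional flat face through $u$, which the bounded-curvature hypothesis as defined in the paper does not exclude; fortunately this is irrelevant once you work with $k^{(2)}$, which is positive for every pair. And for the conclusion of the lemma (pointwise convergence for every starting point $x$, no uniformity) a uniform Doeblin bound is stronger than necessary; aperiodic positive Harris recurrence already suffices via the standard ergodic theorem, so the minorization, while clean, is doing more than the statement requires.
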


To use the stochastic billiard chain as a MCMC sampler for approximate sampling from $\pi$, the chain 
must be stopped after an appropriate number of steps.
Our main result is that it takes order ${{\cal C}^2n^2D^2}$ steps to get
arbitrary close to its uniform equilibrium distribution if the initial distribution is `good', where ${\cal C}$ is the upper bound on the 
curvature of $\partial K$ and 
$D$ is the diameter of $K$, i.e., the largest distance between any two points in the body.
A different way of phrasing this result is to say that the mixing time of the stochastic billiard
chain is order ${{\cal C}^2n^2D^2}$.
 This is a uniform bound over all bodies with diameter at most $D$ and curvature bound ${\cal C}$, 
and it is asymptotic in the dimension $n$.

\begin{theorem}
\label{thm:main}
Let $K$ be a convex body in $\R^n$ with diameter $D$. Suppose 
that the curvature of $\partial K$ is bounded from above by ${\cal C}$. 
Set $M=\sup_A Q_0(A)/\pi(A)$. Then there is a constant $c$ such that, for $k\ge 0$,
\[
\|Q_k-\pi\|_\TV \le \sqrt M \left(1-\frac {c}{{\cal C}^2n^2D^2}\right)^k.
\]
\end{theorem}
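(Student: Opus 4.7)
The plan is to bound the conductance $\phi$ of the stochastic billiard chain from below by a quantity of order $1/({\cal C} n D)$ and then apply the Lov\'asz--Simonovits inequality $\|Q_k-\pi\|_\TV \le \sqrt{M}(1-\phi^2/2)^k$, which immediately produces the claimed decay rate. To lower bound $\phi$, I would follow the by-now standard three-way split. Fix a Borel set $A\subset\partial K$ with $\pi(A)\le 1/2$ and a threshold $\tau>0$ (to be chosen), and define
\[
A_1 = \{x\in A: P_x(A^c) < \tau\}, \qquad A_2=\{y\in A^c: P_y(A)<\tau\}.
\]
If $\pi(A_1)\le \pi(A)/2$, the outflow $\int_A P_x(A^c)\,d\pi(x)$ is already at least $\tau\pi(A)/2$, and an analogous argument works if $\pi(A_2)\le \pi(A^c)/2$. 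Otherwise $A_1$ and $A_2$ each carry more than half the mass of their side, and the task reduces to producing a substantial bridging set $\partial K\setminus(A_1\cup A_2)$ via a one-step overlap estimate and an isoperimetric inequality on $\partial K$.

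The main technical obstacle is an overlap lemma of the form: whenever $x,y\in\partial K$ lie within geodesic distance $\delta = \Theta(1/({\cal C} n D))$ on $\partial K$, then $\|P_x-P_y\|_\TV \le 1/2 + O(\tau)$; contrapositively, any $x\in A_1$ and $y\in A_2$ must be separated on $\partial K$ by at least $\delta$. Two sub-ingredients feed into this. First, Lemma~\ref{lem:curvature} provides a pointwise lower bound on step length: under $P_x^{\cos}$ the chord out of $x$ has length of order at least $1/({\cal C} n)$ with constant probability, so the one-step kernel cannot concentrate arbitrarily close to the base point and ``proper'' steps occur often enough. Second, I would construct a direct coupling between $P_x^{\cos}$ and $P_y^{\cos}$ by identifying the nearly coinciding tangent planes at $x$ and $y$, then track how the chord endpoints move under this coupling and how the Jacobian factor $\cos(\phi_{uv})\cos(\phi_{vu})/\|u-v\|^{n-1}$ in \eqref{one-step-formula} varies. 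The bounded-curvature hypothesis is exactly what controls the first-order variation of the two normals and two incidence angles, so that this Jacobian changes only by a multiplicative factor close to $1$ under the coupling. I expect this to be the most delicate part of the proof, since it involves simultaneously perturbing two normals, two contact angles and an $(n-1)$st-power chord length on a curved surface, and departs sharply from the hit-and-run analysis.

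The remaining ingredient is a Cheeger-type isoperimetric inequality on the Riemannian manifold $\partial K$, which I would invoke from the recent literature on isoperimetry in curved spaces referenced in the introduction. Since the upper curvature bound ${\cal C}$ together with the diameter bound $D$ endow $\partial K$ with a quantitative curvature--dimension condition, any decomposition of $\partial K$ into two pieces at geodesic distance at least $\delta$ forces the middle layer to have measure at least of order $\delta/D$ times the minimum side mass. Applying this to $A_1$ and $A_2$ (separated by $\delta$ via the overlap lemma) shows that $\pi(A\setminus A_1)+\pi(A^c\setminus A_2)$ is large, and since by definition every $x\in A\setminus A_1$ contributes mass at least $\tau$ to the outflow (and similarly for $A^c\setminus A_2$ by reversibility), this yields $\int_A P_x(A^c)\,d\pi(x) = \Omega(\tau\delta/D)$. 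Optimizing the choices of $\tau$ and $\delta$ across all cases gives $\phi = \Omega(1/({\cal C} n D))$ and hence the theorem.
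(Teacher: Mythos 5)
Your proposal follows the paper's architecture exactly: reduce Theorem~\ref{thm:main} to a conductance lower bound and invoke the Lov\'asz--Simonovits corollary, prove the conductance bound via the standard three-way split, and supply the two ingredients — an overlap lemma for one-step distributions from nearby boundary points (built on the bounded-curvature step-size bound of Lemma~\ref{lem:curvature}) and an isoperimetric inequality on $\partial K$. Whether one phrases the overlap estimate as a coupling of $P_x^{\cos}$ and $P_y^{\cos}$ or, as the paper does, as a pointwise density comparison of the kernels in \eqref{one-step-formula} after discarding a few low-probability bad sets, is presentation rather than substance.

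However, the quantitative scalings you state do not close the loop to the claimed exponent. Lemma~\ref{lem:curvature} gives $s_\gamma(x)\gtrsim 1/({\cal C}\sqrt n)$, so the typical chord length from a boundary point is of order $1/({\cal C}\sqrt n)$, not $1/({\cal C}n)$; and the overlap threshold is that chord length divided by one more $\sqrt n$ (the angle tolerance), so $\delta\asymp 1/({\cal C}n)$ — there should be \emph{no} factor of $D$ in the distance threshold. With your stated $\delta=\Theta(1/({\cal C}nD))$, the isoperimetric middle layer has relative measure $\gtrsim \delta/D = 1/({\cal C}nD^2)$, the outflow is $\Omega(\tau/({\cal C}nD^2))$ with $\tau$ a constant, and you end up with $\phi=\Omega(1/({\cal C}nD^2))$, hence a mixing time of order ${\cal C}^2n^2D^4$, a factor $D^2$ worse than the theorem. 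Correcting $\delta$ to $\Theta(1/({\cal C}n))$ and keeping $\tau$ an absolute constant gives the intended $\phi\gtrsim 1/({\cal C}nD)$, and the rest of your argument then delivers the stated bound.
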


We remark that an explicit expression for the constant $c$ can be found by tracing 
constants in the proofs of this paper.
Since it is not our objective to find the sharpest possible constant,
we do not specify the constant in this statement.

We prove Theorem~\ref{thm:main} by bounding the so-called conductance of the stochastic billiard chain;
this is a standard tool for bounding rates of convergence to stationarity for Markov chains.
The conductance is defined as
\[
\phi = \inf_{A\subseteq \partial K: 0<\pi(A)<1/2} \frac{\int_A P_u(\partial K\backslash A) d\pi(u)}{\pi(A)}.
\]
The next proposition states our main result on the conductance of the stochastic billiard chain.
Theorem~\ref{thm:main} immediately follows from this proposition in conjunction with 
Corollary~1.5 from \cite{lovaszsimonovits:1993}.
The constant $c$ is different from the one in Theorem~\ref{thm:main}.

\begin{prop}
\label{prop:condSB}
Under the assumptions of Theorem~\ref{thm:main},
the conductance $\phi$ of the stochastic billiard chain $\{X_k\}$ satisfies 
\[
\phi\ge \frac{c}{{\cal C} nD},
\] 
for some universal constant $c$.
\end{prop}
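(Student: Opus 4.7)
The plan is the standard conductance strategy for MCMC on convex bodies, adapted to the manifold setting of stochastic billiards. Fix a measurable $A \subseteq \partial K$ with $\pi(A) \le 1/2$, a small absolute constant $\eta > 0$, and a scale $\delta$ of order $1/(\mathcal{C} n)$. Partition
\[
A_1 = \{u \in A : P_u(\partial K \setminus A) < \eta\}, \quad A_2 = \{v \in \partial K \setminus A : P_v(A) < \eta\}, \quad A_3 = \partial K \setminus (A_1 \cup A_2).
\]
If $\pi(A_1) \le \pi(A)/2$ or $\pi(A_2) \le \pi(\partial K \setminus A)/2$, integrating directly gives conductance at least a constant multiple of $\eta$, which is much larger than the claimed bound. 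The non-trivial case is $\pi(A_1) \ge \pi(A)/2$ and $\pi(A_2) \ge \pi(\partial K \setminus A)/2$.

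The technical crux is a one-step overlap estimate: with suitable constants, $\|u - v\| \le \delta$ implies $\|P_u - P_v\|_{\TV} < 1 - 2\eta$. Granting this, no point of $A_1$ lies within Euclidean distance $\delta$ of any point of $A_2$, since otherwise
\[
\|P_u - P_v\|_{\TV} \ge 1 - P_u(\partial K \setminus A) - P_v(A) > 1 - 2\eta.
\]
An isoperimetric inequality for the curved space $\partial K$ (made available by the bounded-curvature hypothesis, which controls the intrinsic geometry of $\partial K$) then yields $\vol(A_3) \gtrsim (\delta/D) \min(\vol(A_1), \vol(A_2))$. Since each $u \in A_3$ contributes $P_u(\partial K \setminus A) \ge \eta$ or $P_u(A) \ge \eta$, the usual manipulation of the conductance ratio produces $\phi \gtrsim \eta \delta / D \gtrsim 1/(\mathcal{C} n D)$.

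The main obstacle is the overlap estimate itself, and it splits along the two lines emphasized in the introduction. First, ``proper'' steps must be substantial and common: using Lemma~\ref{lem:curvature} together with a trimming of directions under $P_u^{\cos}$ that are nearly tangent at $u$ or land nearly tangentially at the next boundary point, I would isolate a good event of probability $\ge 1 - \eta/2$ on which $\|X_1 - u\| \gtrsim 1/(\mathcal{C} n)$ and both cosines in \eqref{one-step-formula} are bounded below by constants. Second, for $\|u - v\| \le \delta$ I would couple $P_u$ and $P_v$ by a near-isometric matching of outgoing rays between the tangent hyperplanes at $u$ and $v$, following each ray to its hit on $\partial K$. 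Comparing the densities \eqref{one-step-formula} at matched landing points, each cosine factor and the Jacobian of the matching deform by factors $1 + O(\mathcal{C}\delta)$; the sensitive $\|u - w\|^{-(n-1)}$ term deforms by $1 + O(n\delta/\|u - w\|)$, which the substantial-step bound keeps at $1 + O(\mathcal{C} n \delta)$. Choosing $\delta$ a small enough multiple of $1/(\mathcal{C} n)$ holds the density ratio close to $1$ on the good event, delivering the overlap bound and hence the proposition.
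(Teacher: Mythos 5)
Your conductance argument is structurally identical to the paper's: you partition into deep-interior sets $A_1$, $A_2$ and a transition layer $A_3$, dispose of the easy case where $A_1$ or $A_2$ is small, use the one-step overlap lemma to force a geodesic separation between $A_1$ and $A_2$, and close with the isoperimetric inequality. The paper packages these exact steps around Lemmas~\ref{lem:curvature}, \ref{lem:F-s}, \ref{lem:overlapSB}, and \ref{lem:milman}, with $\eta = 1/(2\kappa)$ in your notation and the separation scale $\delta \asymp F/\sqrt{n} \asymp 1/(\mathcal{C}n)$, matching your choice; so for the proposition itself you have reproduced the paper's proof.

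Where your sketch of the overlap estimate differs is in the claimed quantitative rate, and there it is too optimistic. You assert that on the good event each cosine factor in \eqref{one-step-formula} deforms by $1 + O(\mathcal{C}\delta) = 1 + O(1/n)$ so that the density ratio is close to $1$. That is true for $\cos(\phi_{xv})/\cos(\phi_{xu})$ (the cosine at the landing point, which the paper bounds by $1 - O(1)$ close to $1$), but it fails for $\cos(\phi_{vx})/\cos(\phi_{ux})$: the typical value of $\cos(\phi_{ux})$ under $P_u^{\cos}$ is $\Theta(1/\sqrt{n})$, and moving the base point by $\delta$ while keeping the landing point $x$ at distance $\gtrsim F \asymp 1/(\mathcal{C}\sqrt{n})$ changes the direction by an angle of order $\delta/F \asymp 1/\sqrt{n}$, the same order as $\cos(\phi_{ux})$ itself. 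So this cosine ratio is only constant-factor bounded (the paper's bound is $c_1/c_2$, roughly $0.06$), not $1 + o(1)$, and the resulting total-variation overlap is bounded away from $1$ by a constant, not close to $0$. Fortunately the conductance argument only needs constant-factor comparability, so the conclusion survives; but you should not expect to prove the sharper statement you wrote. The paper's proof of Lemma~\ref{lem:overlapSB} handles this by excluding a constant-probability bad set and then comparing densities directly on its complement, rather than aiming for a near-isometric coupling.
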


The following proposition shows that the mixing time bound of $O({\cal C}^2n^2D^2)$ cannot be improved unless
further conditions are imposed.
This proposition is similar to the lower bound for hit-and-run shown in \cite[Sec.~8]{L2}, but we use different arguments.

\begin{prop}
Consider the stochastic billiard process on $\partial K$ with 
\[
K = \{x\in \R^n: |x_1|\le D/2, x_2^2+\cdots+x_n^2\le 1/{\cal C}^2\}$ and $D/2\ge 1/{\cal C}.
\]
For $\epsilon>0$ small enough, we have 
\[
\| Q_{\epsilon  {\cal C}^2 n^2 D^2} -\pi\|_\TV \ge 1/16.
\]
\end{prop}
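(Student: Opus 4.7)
The plan is to start the chain from a point mass $Q_0 = \delta_{x^0}$ at $x^0 = (0, 1/{\cal C}, 0, \ldots, 0)$ in the middle of the curved cylindrical surface; let $Y_j := (X_j)_1$ be the axial coordinate of the chain. I will show that $|Y_k|$ stays in $[-D/4, D/4]$ with probability at least $9/16$ when $k = \epsilon {\cal C}^2 n^2 D^2$. The geometric intuition is that from the curved side of a long thin cylinder ($D {\cal C} \ge 2$), the cosine-law direction has an axial component of size roughly $1/\sqrt n$ and a travel time of order $1/({\cal C}\sqrt n)$, so a single step produces an axial displacement of typical magnitude $1/({\cal C} n)$; $k$ steps then accumulate a diffusive displacement of order $\sqrt k/({\cal C} n) = \sqrt\epsilon\, D$, still well inside the central slab. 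Setting $A = \{x \in \partial K : |x_1| \le D/4\}$, the flat disks lie at $|x_1| = D/2$ outside $A$, and the curved surface intersected with $A$ has exactly half of the total curved surface area by axial proportion; hence $\pi(A) \le 1/2$, and $Q_k(A) \ge 9/16$ would give $\|Q_k - \pi\|_\TV \ge Q_k(A) - \pi(A) \ge 1/16$.

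The key one-step estimate is the conditional second moment of the axial step $\Delta Y_j := Y_{j+1} - Y_j$. Parametrising the cosine-law outgoing direction at $x = (x_1, 1/{\cal C}, 0, \ldots, 0)$ by a uniform sample $v$ from the unit ball $B^{n-1}$ in the tangent plane at $x$ (lifted via $w = v + \sqrt{1-|v|^2}\, n_x$), a chord-length computation in the $(n-1)$-dimensional cross-section gives the curved-to-curved axial step
\[
\Delta Y = \frac{2 v_1 \sqrt{1-|v|^2}}{{\cal C}(1-v_1^2)},
\]
and whenever a flat disk is reached first the true $|\Delta Y|$ is strictly smaller, so this formula bounds $|\Delta Y|$ uniformly from above. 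Using the marginal density of $v_1$ proportional to $(1-v_1^2)^{(n-2)/2}$, the conditional identity $E[1-|v|^2 \mid v_1] = 2(1-v_1^2)/n$, and the Beta integral $E[v_1^2/(1-v_1^2)] = 1/(n-2)$, one obtains
\[
E[(\Delta Y)^2 \mid X_j = x] \le \frac{16}{{\cal C}^2 n(n-2)}.
\]
The reflection $v_1 \mapsto -v_1$ further shows that $E[\Delta Y \mid X_j = x] = 0$ when no cut-off at a flat disk occurs, and the tail estimate $P(|v_1| \ge c) \le (1-c^2)^{(n-2)/2}$ controls the cut-off-induced drift by a quantity exponentially small in $n$.

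To combine these, introduce the stopping time $\tau = \inf\{j : |Y_j| > D/4\}$. For $j < \tau$ the bound $|Y_j| \le D/4$ together with the exponentially small drift bound absorbs the cross term $2 Y_j E[\Delta Y_j \mid \FF_j]$ into the variance, so $Y_{j \wedge \tau}^2 - (j \wedge \tau)\, C_0/({\cal C}^2 n^2)$ is a supermartingale for some absolute constant $C_0$. Optional stopping gives
\[
E\bigl[Y_{k \wedge \tau}^2\bigr] \le \frac{C_0 k}{{\cal C}^2 n^2} = C_0 \epsilon D^2,
\]
and Markov's inequality yields $P(\tau \le k) = P(|Y_{k \wedge \tau}| > D/4) \le 16 C_0 \epsilon$. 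Since $\{X_k \in A^c\} \subseteq \{\tau \le k\}$, choosing $\epsilon$ sufficiently small forces $Q_k(A) \ge 9/16$, and the proposition follows.

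The main technical obstacle is the asymmetric truncation of the straight-line trajectory by the flat disks when $Y_j \ne 0$, which prevents $Y_j$ from being an exact martingale; I would handle this through the same tail event $\{|v_1| \ge c\}$, which is exponentially small in $n$ and simultaneously bounds both the drift and the probability of an atypically large axial step. Any residual dimension-dependence in this bound (for instance when $D{\cal C}$ is close to the allowed threshold $2$) can be absorbed into the "small enough" choice of $\epsilon$ in the proposition.
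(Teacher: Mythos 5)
Your proposal is correct and rests on the same geometric core as the paper's proof: compute the variance of the axial increment of a single step from the curved wall, find it is $\Theta(1/({\cal C}n)^2)$, and conclude that after $k$ steps the axial coordinate has only diffused by $O(\sqrt k/({\cal C}n))$, so that for $k=\epsilon {\cal C}^2n^2D^2$ the chain is still concentrated in the central slab while $\pi$ is not. Your step-size formula $\Delta Y = 2v_1\sqrt{1-|v|^2}/({\cal C}(1-v_1^2))$ is exactly the paper's $Z_1$ after the same tangent-ball parametrisation, and your Beta-integral evaluation $E[v_1^2/(1-v_1^2)]=1/(n-2)$ together with $E[1-|v|^2\mid v_1]=2(1-v_1^2)/n$ makes the order-$1/n^2$ claim, which the paper leaves as ``readily verified,'' fully explicit.

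Where you diverge is in the tail argument. The paper compares the chain on $K$ to the chain on the infinite cylinder $\tilde K$, observes that on $\tilde K$ the axial coordinate is an i.i.d.-increment random walk, and appeals to Donsker's invariance principle to estimate the hitting time $\tau_{D/4}$; it also uses the one-sided test set $\{x_1\ge D/4\}$ with $\pi$-mass $\ge 1/8$. You instead use the two-sided slab $A=\{|x_1|\le D/4\}$ with $\pi(A)\le 1/2$, avoid introducing $\tilde K$ explicitly by noting that truncation at a flat disk only shrinks $|\Delta Y|$, and replace the soft Donsker step with a supermartingale, optional stopping, and Markov's inequality. This is a quantitative and self-contained substitute for the paper's functional-CLT appeal, and it is arguably the cleaner route. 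The one place that needs a touch more care than you give it is the drift induced by disk truncation: for $j<\tau$ the truncation event requires $|v_1|\gtrsim {\cal C}D/\sqrt{1+({\cal C}D)^2}$, and the resulting cross term $2|Y_j|\,|E[\Delta Y_j\mid\FF_j]|$ must be bounded by a fixed multiple of $E[(\Delta Y_j)^2\mid\FF_j]$ uniformly over ${\cal C}D\ge 2$; this does hold for $n$ large (the polynomial factor $({\cal C}D)^2n^2$ is dominated by $(1+({\cal C}D/8)^2)^{-(n-2)/2}$ uniformly in ${\cal C}D\ge 2$), but this uniformity, rather than ``choosing $\epsilon$ small,'' is what closes the supermartingale step.
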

\begin{proof}
We may assume without loss of generality that $\mathcal C=1$ by scaling $K$ if this were not the case (which has no impact on the dynamics of the stochastic billard).
We study the first coordinate $\{X_{k,1}:k\ge0\}$ 
of the stochastic billiard process on the boundary of $\R \times\{0\}^{n-1} + B_n$. Due to symmetry, this process is a centered random walk on $\R$.

The first step of the proof is to study the step size distribution of this random walk. To this end, it is convenient to shift the body so that the origin lies on the boundary.
We thus consider $\tilde K=\R \times\{0\}^{n-1} + B_n - (0,-1,0,\ldots,0)$, which can also be written as
$\tilde K=\{x\in \R^n: (1-x_2)^2+x_3^2 +\ldots+x_n^2\le 1\}$.

Let $X$ have the uniform distribution on an $(n-1)$-dimensional unit ball tangent to $\tilde K$ at the origin; the tangent plane is 
orthogonal to $(0,1,0,\ldots,0)$.
Let $Y$ be the projection of $X$ on the unit halfsphere $B_n\cap \{x:x_2\ge 0\}$, i.e.,
\[
Y = (X_1, \sqrt{1-\|X\|^2}, X_3, \ldots, X_n).
\]
It is readily verified that the line between the origin and $Y$ intersects $\partial {\tilde K}$ at the point
\[
Z=\frac{2Y}{1-X_1^2} \sqrt{1-\|X\|^2}.
\]
Note that the distribution of $Z$ equals the one-step distribution of the stochastic billiard process starting from the origin.

We next show that $\Var(Z_1)$ is of order $1/n^2$. We note that
\[
P(\|X\|^2-X_1^2\le r, |X_1|\le x) = 2xr^{n-2} \frac{\vol(B_{n-2})}{\vol(B_{n-1})},
\]
so for $r^2+x^2\le 1, r\ge 0$ the density of $(\|X\|^2-X_1^2, X_1^2)$ is
\[
f(r,x) = \frac{\vol(B_{n-1})}{\vol(B_n)} (n-2) r^{n-3},
\]
which shows that
\[
\Var(Z_1) = \E \left[\left(1-\|X\|^2\right)\frac{4X_1^2}{(1-X_1^2)^2}\right] = \int_{r\ge 0, r^2+x^2\le 1} (1-r^2-x^2) \frac{4x^2}{(1-x^2)^2} f(r,x) dxdr,
\]
and it is readily verified that this is of order $1/n^2$.

Thus, in the $x_1$ direction, the stochastic billiard process on $\tilde K$ is a centered random walk and its step size has standard deviation of order $1/n$. 
Writing $\tau_x = \inf\{k\ge 0: X_{k,1}\ge x\}$ for $x>0$, we know that $\tau_{D/4}$ is of order $n^2D^2$ as
can for instance be deduced from Donsker's Central Limit Theorem for random walks.

Since $D\ge 2$, we have
\[
\pi(\{x:x_1\ge D/4\}) \ge \frac{D/4}{D+2} \ge \frac 18.
\]
Upon comparing the stochastic billiard process on the boundary of $\tilde K$ versus $K$, we obtain that
\[
Q_{\epsilon  n^2 D^2} (\{x:x_1\ge D/4\}) \le P(\tau_{D/4} \le \epsilon  n^2 D^2),
\]
which is at most $1/16$ for small enough $\epsilon$.
We conclude that
\[
\| Q_{\epsilon  n^2 D^2} - \pi\|_\TV \ge \pi(\{x:x_1\ge D/4\})  - Q_{\epsilon  n^2 D^2} (\{x:x_1\ge D/4\}) \ge 1/16,
\]
and this establishes the claim.
\end{proof}

\section{Single step analysis}

This section studies the one-step distribution in detail.
For $x\in \partial K$, define $F(x)$ through
\[
P_x(y\in\partial K: |x-y|\le F(x)) = \frac 1{128}.
\]
We think of $F(x)$ as the `median' step size from $x$.
The goals of this section are two-fold: (1) to establish a pointwise lower bound on $F$ which does not depend on $x$
and (2) to establish that the distribution of two points on the boundary $\partial K$ overlap considerably if the points are sufficiently close.
We discuss these two parts in Sections~\ref{sec:LBonF} and \ref{sec:overlap}, respectively.

The proofs of these two parts proceed essentially independently, but the following auxiliary lemma is used in both parts.
The lemma intuitively says that the projection of $Y$ onto the normal at $x$ lies at distance about $1/\sqrt{n}$ from $x$ if the distribution of $Y$ is
$P^{\cos}_x$ or $P^{\unif}_x$, and gives precise asymptotic characterizations of the asymptotic probabilities.

\begin{lemma}
\label{lem:volumelemma}
For $A\subseteq \R_+$, we have
\begin{eqnarray*}
\lim_{n\to\infty} P^{\cos}_x ( \{y\in S_x: n_x\cdot (y-x) \in A/\sqrt{n}\}) &= &
\int_{A} x \exp(-x^2/2) dx, \\
\lim_{n\to\infty} P^{\unif}_x ( \{y\in S_x: n_x\cdot (y-x) \in A/\sqrt{n}\}) &=& 
\int_{A} \frac{1}{\sqrt{\pi/2}} \exp(-x^2/2) dx,
\end{eqnarray*}
and for $A\subseteq \R$, we have
\begin{eqnarray*}
\lim_{n\to\infty} \tilde P^{\cos}_x ( \{y\in S_x: n_x\cdot (y-x) \in A/\sqrt{n}\}) &=& 
\int_{A} \frac{x}{2} \exp(-x^2/2) dx, \\
\lim_{n\to\infty} \tilde P^{\unif}_x ( \{y\in S_x: n_x\cdot (y-x) \in A/\sqrt{n}\}) &=& 
\int_{A} \frac{1}{\sqrt{2\pi}} \exp(-x^2/2) dx.
\end{eqnarray*}
\end{lemma}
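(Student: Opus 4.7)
The plan is to reduce the four limits to explicit one-dimensional densities on $[-1,1]$ and then apply the standard asymptotics $(1-s^2/n)^{(n-3)/2}\to e^{-s^2/2}$ together with the Stirling ratio $\Gamma(n/2)/\Gamma((n-1)/2)\sim\sqrt{n/2}$. I would fix $x\in\partial K$ and let $T=n_x\cdot(y-x)\in[-1,1]$, so that the four laws on $S_x$ push forward to four one-dimensional distributions for $T$. Since $y-x$ lies on a unit sphere centered at $x$ and $n_x$ is a fixed unit vector, no geometry of $K$ enters; the lemma is essentially a statement about marginals of standard measures on $S^{n-1}$.

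First I would write down the four marginal densities. For $\tilde P^{\unif}_x$, slicing $S^{n-1}$ orthogonally to $n_x$ and integrating out the resulting $(n-2)$-sphere yields the classical density
\[
f_n(t)=\frac{\Gamma(n/2)}{\sqrt{\pi}\,\Gamma((n-1)/2)}\,(1-t^2)^{(n-3)/2},\qquad t\in[-1,1].
\]
$P^{\unif}_x$ is this measure conditioned on $T\ge0$, so its marginal has density $2 f_n(t)$ on $[0,1]$. For the cosine laws the ambient density on $S_x$ carries an extra factor $|n_x\cdot(y-x)|=|t|$, and after normalizing via $\int_0^1 t(1-t^2)^{(n-3)/2}\,dt=1/(n-1)$ one obtains density $(n-1)\,t\,(1-t^2)^{(n-3)/2}$ on $[0,1]$ under $P^{\cos}_x$ and $\tfrac{n-1}{2}|t|(1-t^2)^{(n-3)/2}$ on $[-1,1]$ under $\tilde P^{\cos}_x$.

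Next I would rescale by $s=\sqrt{n}\,t$. Under $\tilde P^{\unif}_x$ the density of $S=\sqrt{n}\,T$ becomes $\tfrac{1}{\sqrt{n}}\cdot\tfrac{\Gamma(n/2)}{\sqrt{\pi}\,\Gamma((n-1)/2)}\,(1-s^2/n)^{(n-3)/2}$; the Stirling ratio sends the prefactor to $1/\sqrt{2\pi}$ and the bracketed factor to $e^{-s^2/2}$, giving the standard normal density. The analogous substitutions in the three other cases yield the limits $s e^{-s^2/2}$, $\sqrt{2/\pi}\,e^{-s^2/2}$, and $\tfrac{|s|}{2}e^{-s^2/2}$ for $P^{\cos}_x$, $P^{\unif}_x$, and $\tilde P^{\cos}_x$ respectively, matching the statement. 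I would conclude by invoking Scheffé's lemma: pointwise convergence of probability densities implies $L^1$ convergence of the densities, hence convergence of the probability of every Borel set $A$.

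The main obstacle is purely bookkeeping — lining up each Gamma-function prefactor with the correct Gaussian constant — and some care is needed in the cosine cases because the additional $|t|$ factor shifts the relevant normalization from $\omega_{n-2}/\omega_{n-1}$ to something of order $1/(n-1)$. There is no deeper difficulty; the lemma is essentially a classical concentration-of-measure statement for high-dimensional spheres, combined with a cosine-tilt reweighting.
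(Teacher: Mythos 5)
Your proof is correct and takes essentially the same route as the paper: reduce each law to the one-dimensional marginal of $T=n_x\cdot(y-x)$ on the sphere, rescale by $\sqrt{n}$, and pass to the Gaussian limit. The only cosmetic differences are that the paper writes the slicing kernel as $(1-t^2)^{(n-2)/2}$ (dropping the Jacobian factor, which is asymptotically harmless) and cancels all normalizing constants by expressing each probability as a ratio of integrals, whereas you track the exact exponent $(n-3)/2$, compute the Gamma-ratio prefactors explicitly, and close with Scheff\'e's lemma.
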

\begin{proof}
The key ingredient in the proof is the observation that an $(n-2)$-dimensional sphere with radius $r$
has surface measure proportional to $r^{n-2}$. The radius of the 
$(n-2)$-dimensional sphere $\{y\in S_x: n_x\cdot (y-x)=t\}$ is $\sqrt{1-t^2}$, so for the probability
under $P^{\cos}_x$ we find that
\[
\lim_{n\to\infty} \frac{\int_{A/\sqrt{n}}  t (\sqrt{1-t^2})^{n-2} dt} 
{\int_{0}^{\infty}  t (\sqrt{1-t^2})^{n-2} dt} = \frac{\int_A s\exp(-s^2/2)ds}{\int_0^\infty s\exp(-s^2/2)ds}
=\int_A s\exp(-s^2/2)ds.
\]
Similarly, for the probability under $P^{\unif}_x$ we find that
\[
\lim_{n\to\infty} \frac{\int_{A/\sqrt{n}}  (\sqrt{1-t^2})^{n-2} dt} 
{\int_{0}^{\infty}  (\sqrt{1-t^2})^{n-2} dt} = \frac{\int_A \exp(-s^2/2)ds}{\int_0^\infty \exp(-s^2/2)ds}
=\int_A \frac{1}{\sqrt{\pi/2}}\exp(-s^2/2)ds.
\]
The statements for $\tilde P^{\cos}_x$ and $\tilde P^{\unif}_x$ are found analogously.
\end{proof}

\subsection{Lower bound on $F$}
\label{sec:LBonF}

The main result of this subsection is the following lemma, 
which guarantees that the stochastic billiard Markov chain 
makes `large enough' steps with good probability.

\begin{lemma}
\label{lem:LBonF}
If $K$ is a convex body with curvature bounded from above by ${\cal C} < \infty$,
then there exists a constant $c$ such that  $F(x)\ge c/({\cal C} \sqrt{n})$ for $x\in\partial K$.
\end{lemma}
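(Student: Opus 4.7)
The plan is to combine two ingredients. First, a deterministic lower bound on the step length, coming from the inscribed ball guaranteed by the bounded-curvature hypothesis. Second, a tail bound on the cosine of the incidence angle under $P_x^{\cos}$, essentially the same spherical-slice computation that underlies Lemma~\ref{lem:volumelemma}.

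For the geometric piece, the curvature assumption provides a ball $B_x \subseteq K$ of radius $1/\mathcal{C}$ tangent to $\partial K$ at $x$ with center $p = x + (1/\mathcal{C}) n_x$. For any unit direction $\hat w$ with $n_x \cdot \hat w > 0$, solving $|x + t\hat w - p|^2 = 1/\mathcal{C}^2$ shows that the ray from $x$ in direction $\hat w$ meets $\partial B_x$ again at $t = (2/\mathcal{C})(n_x \cdot \hat w)$. Since $B_x \subseteq K$, the ray stays inside $K$ at least that long, so the next billiard point $y = x + L\hat w$ obeys the pointwise lower bound
\[
L \;\geq\; \frac{2}{\mathcal{C}}\,\cos\phi_{xy},
\]
where $\cos\phi_{xy} = n_x \cdot \hat w$ is the cosine of the incidence angle at $x$.

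For the probabilistic piece, let $T = n_x \cdot \hat w$ with $\hat w \sim P_x^{\cos}$. Slicing the upper half of $S_x$ by hyperplanes orthogonal to $n_x$ (as in the proof of Lemma~\ref{lem:volumelemma}, but keeping the Jacobian in the marginal) gives the closed-form CDF
\[
P_x^{\cos}(T \leq s) \;=\; 1 - (1-s^2)^{(n-1)/2}, \qquad s \in [0,1],
\]
and Bernoulli's inequality $(1-x)^r \geq 1 - rx$ then yields the clean tail bound $P_x^{\cos}(T \leq a/\sqrt{n}) \leq a^2/2$ for $a \leq \sqrt{n}$. Taking $a = 1/8$ makes the right-hand side equal to $1/128$, and combining with the geometric bound gives
\[
P_x\!\left(|x - y| \leq \frac{1}{4\mathcal{C}\sqrt{n}}\right) \;\leq\; P_x^{\cos}\!\left(\cos\phi_{xy} \leq \frac{1}{8\sqrt{n}}\right) \;\leq\; \frac{1}{128}.
\]
By the definition of $F(x)$ as the $1/128$-quantile of the step length, this forces $F(x) \geq 1/(4\mathcal{C}\sqrt{n})$, proving the lemma with $c = 1/4$.

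The proof is quite short; the only place that requires thought is noticing that the inscribed ball at $x$, not merely the containment of some large ball in $K$, is the right tool, because it lets the step-length bound involve $\cos\phi_{xy}$ rather than just a constant. After that, the tail bound on $T$ is automatic from the explicit CDF, and no asymptotic-in-$n$ estimate is needed.
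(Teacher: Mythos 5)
Your proof is correct, and it takes a genuinely different and more direct route than the paper.

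The paper factors Lemma~\ref{lem:LBonF} through an intermediate quantity: the ``volumetric step size'' $s_\gamma(x)$, the largest $t$ for which $x + tB^n$ has at least a $\gamma$-fraction of its volume inside $K$. Lemma~\ref{lem:curvature} lower-bounds $s_\gamma$ using the tangent ball, and Lemma~\ref{lem:F-s} then compares $F$ to $s_\gamma$ via a change of measure from $P^{\cos}_u$ to $P^{\unif}_u$ (bounding a cap probability) together with the asymptotic estimates of Lemma~\ref{lem:volumelemma}. Your argument short-circuits this: the tangent ball at $x$ gives the \emph{pointwise, direction-resolved} bound $|x-y| \ge (2/\mathcal{C})\cos\phi_{xy}$, so the step length is controlled directly by the incidence angle, and the rest is a one-line tail bound on $T = n_x\cdot \hat w$ under $P^{\cos}_x$. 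The explicit CDF $P^{\cos}_x(T\le s) = 1-(1-s^2)^{(n-1)/2}$ you use is exactly the finite-$n$ version of the density that Lemma~\ref{lem:volumelemma} only states in the $n\to\infty$ limit; by keeping it exact and applying Bernoulli's inequality, you get a clean non-asymptotic bound valid for all $n\ge 3$, whereas the paper's Lemma~\ref{lem:F-s} is stated only ``for large enough $n$.'' You also get an explicit constant, $c = 1/4$, whereas the paper leaves it implicit. The paper's decomposition is arguably designed to separate the curvature hypothesis (which only enters Lemma~\ref{lem:curvature}) from the analysis of the cosine law (which only enters Lemma~\ref{lem:F-s}); that modularity might be useful for relaxing the curvature assumption, as the authors note they would like to do. But for proving this particular lemma, your route is shorter, sharper, and more elementary.

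One small remark: your Bernoulli step uses $(1-x)^r \ge 1-rx$ with $r = (n-1)/2$, which requires $r\ge 1$, i.e.\ $n\ge 3$; for $n=2$ the bound needs a trivial separate check. This is harmless (and strictly better than the paper's ``large $n$'' caveat), but worth noting if you want the statement to hold verbatim for all $n$.
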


We prove this lemma by comparing $F$ with a family of functions $\{s_\gamma:0\le\gamma\le 1\}$ 
that is easier to bound.
As with $F$, one similarly interprets $s_\gamma$ as a step size.
For $1 \ge \gamma \ge 0$ and $x \in K$, it is defined through
\begin{equation}
\label{eq:defs}
s_\gamma(x) = \sup\left\{ t\ge0 \, : \, \frac{\vol((x+ tB^n) \cap K)}{\vol(tB^n)} \ge \gamma\right\}.
\end{equation}
We are now ready to formulate our comparison between $F$ and $s_\gamma$.

\begin{lemma}\label{lem:F-s}
For any $u \in \partial K$ and $1/2-1/2048 \le \gamma \le 1/2$,
we have for large enough $n$,
\[
F(u) \ge \frac {s_\gamma(u)}{2}.
\]
\end{lemma}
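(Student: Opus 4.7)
The plan is to show $P_u^{\cos}(E) \le 1/128$, where $s := s_\gamma(u)$, $\rho(w)$ denotes the distance from $u$ to $\partial K$ along a unit vector $w$, and $E := \{w \in S^{n-1} : n_u \cdot w \ge 0 \text{ and } \rho(w) \le s/2\}$ is the set of ``bad'' directions; this is equivalent to $F(u) \ge s/2$. The argument splits into three steps.

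First I would convert the defining volume inequality into a bound on the uniform surface measure $\sigma(E)$ on the upper halfsphere. Convexity together with the tangent plane at $u$ gives $K \cap B(u, s) \subseteq H_+ := \{y : n_u \cdot (y - u) \ge 0\}$, so the excess region $A^+ := H_+ \cap B(u, s) \setminus K$ has $\vol(A^+) \le (1/2 - \gamma)\vol(B(u, s)) \le \vol(B(u, s))/2048$. Each $w \in E$ contributes a disjoint cone $\{u + rw : \rho(w) < r \le s\} \subseteq A^+$, and integration in polar coordinates yields
\[
\vol(A^+) \;\ge\; \int_E \frac{s^n - \rho(w)^n}{n}\,d\sigma(w) \;\ge\; \sigma(E)\,\frac{s^n(1 - 2^{-n})}{n},
\]
so $\sigma(E) \le n\,\vol(B^n)/\bigl(2048(1 - 2^{-n})\bigr)$.

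Next I would pass from $\sigma$ to the cosine weighting via the bathtub (rearrangement) principle: since $f(w) := n_u \cdot w$ is nonnegative on the halfsphere, $\int_E f\,d\sigma$ is maximized, over subsets of prescribed $\sigma$-measure, by a polar cap $\{w : n_u \cdot w \ge T^*\}$. Choosing $T^*$ so that $\sigma(\{w : n_u \cdot w \ge T^*\}) = n\,\vol(B^n)/\bigl(2048(1-2^{-n})\bigr)$ therefore gives
\[
P_u^{\cos}(E) \;\le\; P_u^{\cos}(\{n_u \cdot w \ge T^*\}) \;=\; (1 - T^{*2})^{(n-1)/2}.
\]

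Finally I would invoke Lemma~\ref{lem:volumelemma} to pin down $T^*$ and the limit of the right-hand side. The uniform statement of that lemma gives $P_u^{\unif}(\{n_u \cdot w \ge T^*\}) \to 1/1024$, hence $T^*\sqrt{n} \to \Psi^{-1}(1/2048)$; the cosine statement then gives $P_u^{\cos}(\{n_u \cdot w \ge T^*\}) \to \exp\!\bigl(-\Psi^{-1}(1/2048)^2/2\bigr)$. A numerical check ($\Psi^{-1}(1/2048) \approx 3.30$, giving an exponent of $\approx 5.44 > \ln 128 \approx 4.85$) shows this limiting value is strictly below $1/128$, which is where the ``for large enough $n$'' hypothesis enters. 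The main conceptual hurdle is recognising that nothing beyond the volume bound and rearrangement is needed: the curvature assumption is not used anywhere in this lemma (it enters only in the subsequent pointwise lower bound on $s_\gamma(u)$), and the constants $1/2048$ and $1/128$ in the hypothesis are precisely calibrated so that $\Psi^{-1}(1/2048)^2/2 > \ln 128$, making the Gaussian tails from Lemma~\ref{lem:volumelemma} line up.
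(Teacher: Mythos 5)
Your proposal is correct and follows essentially the same strategy as the paper: bound the uniform measure of the bad direction set via the defining volume inequality, pass to the cosine measure by a rearrangement argument (the paper phrases this as a change-of-measure expectation extremized over spherical caps), and invoke Lemma~\ref{lem:volumelemma} for the Gaussian limits. Your direct cone integration over $A^+ \subseteq H_+$ yields a slightly sharper intermediate bound ($P_u^{\unif}(E)\lesssim 1/1024$ versus the paper's $1/512$, obtained from the cruder estimate $\vol((u+sB^n)\setminus K)\ge p\,\vol(sB^n)-\vol((s/2)B^n)$ on the full sphere), and you write the cap probability in the closed form $(1-T^{*2})^{(n-1)/2}$ before taking limits, but both routes land below $1/128$ with room to spare; your observation that curvature plays no role in this lemma is likewise correct.
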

\begin{proof}
Fix $u\in \partial K$, and write $s = s_{\gamma}(u)$.
Let $T\subseteq S_u$ be given by
\[
T=\{u+ y: u+(s/2) y \in K, y\in S^{n-1}\}.
\]
Writing $p=\tilde P_u^{\unif}(S_u\backslash T)$ for the fraction of $u+(s/2)S^{n-1}$ that lies outside $K$, we find 
by convexity of $K$ that
\[
\vol((u + s B^n) \setminus K) \ge p \vol(sB^n)-\vol\left(\frac s2 B^n\right).
\]
By definition of $s$, we also have $\vol((u+sB^n) \setminus K) \le (1-\gamma)\vol(sB^n)$.
Since $\vol(\alpha B^n) = \alpha^n\vol(B^n)$, we deduce that for large enough $n$ 
and $\gamma \ge 1023/2048$ that
\[
p \le 1-\gamma + \frac{1}{2^n} \le \frac{513}{1024}.
\]
Note that, by definition of $p$, 
\[
P_u^{\unif}(S_u\backslash T) =1- 2(1-p)\le 1/512.
\]
We next argue that $P^{\cos}_u(S_u\backslash T)\le 1/128$, 
so that $F(u)\ge s/2$ then follows from 
\[
P_u\left(y\in\partial K: |u-y| < \frac s2\right)  = 1-P_u^{\cos}(T) \le \frac{1}{128}.
\]

To show that $P^{\cos}_u(S_u\backslash T)\le 1/128$, we use a change-of-measure argument.
Let $f^{\cos}$ and $f^{\unif}$ be the densities on $S_u$ of $P^{\cos}$ and $P^{\unif}$, respectively,
and write $E_u^{\unif}$ for the expectation operator of $P^{\unif}$.
We let $X$ denote the random variable given by $X(\omega) =\omega$ for $\omega\in S_u$.
After noting that
\[
\frac{f^{\cos}(y)}{f^{\unif}(y)}=\frac{\cos(\phi_{uy})}{E_u^{\unif} [n_u\cdot (X-u)]},
\]
we write
\begin{eqnarray*}
P_u^{\cos} (S_u\backslash T) &=& E_u^{\unif} \left[\frac{f^{\cos}(X)}{f^{\unif}(X)}; X\in S_u\backslash T\right] \\
&\le& \sup_{A: P_u^{\unif}(A)\le 1/512} E_u^{\unif} \left[\frac{f^{\cos}(X)}{f^{\unif}(X)}; X\in A\right] \\
&=&  \sup_{A: P_u^{\unif}(A)\le 1/512} \frac{E_u^{\unif} [\cos(\phi_{uX}); X\in A]}{E_u^{\unif} [n_u\cdot (X-u)]} \\
&=& \frac{E_u^{\unif} [\cos(\phi_{uX}); X\in C^{1/512}]}{E_u^{\unif} [n_u\cdot (X-u)]},
\end{eqnarray*}
where $C^{1/512}=\{x\in S_u: n_u\cdot (x-u)\ge c'_n/\sqrt{n}\}$ is the cap of $S_u$ in `direction' $n_u$,
where $c'_n$ is chosen so that $P_u^{\unif}(C^{1/512}) = 1/512$.
Note that $c'_n\to \Psi^{-1}(1/1024)$ as $n\to\infty$ by Lemma~\ref{lem:volumelemma}.
The asymptotic behavior as $n\to\infty$ of the denominator is readily found:
\[
E_u^{\unif} [n_u\cdot (X-u)] = \int_0^1 \sqrt{1-r^2} \, (n-1) r^{n-2} dr =
\frac{\sqrt{\pi} (n-1) \Gamma((n-1)/2)} {4 \Gamma(n/2+1)}
\sim \sqrt{\frac{\pi}{2n}},
\]
where the notation $f(n)\sim g(n)$ as $n\to\infty$ is shorthand for $\lim_{n\to\infty} f(n)/g(n)=1$.
As for the numerator, we find by again applying Lemma~\ref{lem:volumelemma} that
\[
E_u^{\unif} [\cos(\phi_{uX}); X\in C^{1/512}] \sim \frac{1}{\sqrt{n}} \int_{\Psi^{-1}(1/1024)}^\infty s \exp(-s^2/2) ds = \frac{\exp(-\Psi^{-1}(1/1024)^2/2)}{\sqrt{n}}.
\]
Since $\sqrt{2/\pi}\exp(-\Psi^{-1}(1/1024)^2/2) < 1/128$, we find that for large $n$,
\[
P_u^{\cos}(S_u\backslash T) \le 1/128,
\]
which proves the claim.
\end{proof}

Lemma~\ref{lem:LBonF} follows by combining the preceding lemma with the following result, which establishes a lower bound on the `step size' $s_\gamma$
as defined in (\ref{eq:defs}).

\begin{lemma}
\label{lem:curvature}
Let $\gamma\in(0,1/2)$.
If $K$ is a convex body with curvature bounded from above by ${\cal C}<\infty$, then 
$$s_\gamma(x)\ge \frac {c_\gamma}{{\cal C}\sqrt{n}},$$
where $c_\gamma$ is a constant only depending on $\gamma$.
\end{lemma}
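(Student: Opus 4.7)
The plan is to exploit the curvature bound directly: at $x\in\partial K$ there is a ball $B$ of radius $R = 1/\mathcal{C}$ with center $c = x + Rn_x$ contained in $K$ and tangent to $\partial K$ at $x$. Since $B\subseteq K$, we have $\vol((x+tB^n)\cap K)\ge \vol((x+tB^n)\cap B)$, so it suffices to bound the fraction of a small ball around $x$ that lies in $B$. A point $y = x+u$ lies in $B$ iff $|u-Rn_x|\le R$, i.e.\ iff $u\cdot n_x \ge |u|^2/(2R)$.

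First I would write the volume fraction in spherical coordinates, with $\mu_{n-1}$ the uniform probability on $S^{n-1}$:
\[
\frac{\vol((x+tB^n)\cap B)}{\vol(tB^n)}
= n\int_0^1 u^{n-1}\,\mu_{n-1}\Bigl(\bigl\{\omega\in S^{n-1}:\omega\cdot n_x\ge ut/(2R)\bigr\}\Bigr)\,du.
\]
The cap probability appearing in the integrand is decreasing in $u$, so it is minimized at $u=1$. Thus the left-hand side is at least $\mu_{n-1}\bigl(\{\omega:\omega\cdot n_x\ge t/(2R)\}\bigr)$.

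Next I would set $t = c_\gamma R/\sqrt{n} = c_\gamma/(\mathcal{C}\sqrt{n})$, so that $t/(2R) = c_\gamma/(2\sqrt{n})$. It remains to show that the cap measure $\mu_{n-1}\bigl(\{\omega\cdot n_x\ge c_\gamma/(2\sqrt{n})\}\bigr)$ exceeds $\gamma$ for $c_\gamma>0$ chosen small enough. For large $n$, this follows directly from Lemma~\ref{lem:volumelemma} applied to $\tilde P_x^{\unif}$: the cap measure tends to $\Psi(c_\gamma/2)$, which exceeds $\gamma$ provided $c_\gamma < 2\Psi^{-1}(\gamma)$ (and $\gamma<1/2$ makes $\Psi^{-1}(\gamma)$ strictly positive). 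For the finitely many small $n$ one can either compute the cap measure directly from the density $f(z)\propto (1-z^2)^{(n-3)/2}$ on $[-1,1]$, whose value at $0$ is $O(\sqrt n)$ by a Beta/Stirling estimate (yielding $\mu_{n-1}(\omega\cdot n_x\in[0,a])\le Ca\sqrt n$ and hence $\mu_{n-1}(\omega\cdot n_x\ge a)\ge 1/2 - Ca\sqrt n$), or simply shrink $c_\gamma$ to absorb those cases into the constant.

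The only obstacle is verifying the cap-measure bound non-asymptotically, but this is a classical computation and not delicate; the main conceptual content is the convexity-plus-curvature geometric step showing that $(x+tB^n)\cap B$ captures essentially half of $x+tB^n$ when $t\ll R/\sqrt n$, because for such $t$ the curved boundary of $B$ deviates from the tangent hyperplane at $x$ by only $O(t^2/R)=O(t/\sqrt n)$, which is small compared to $t$ in the direction $n_x$ where the sphere is thick. Combining all the pieces yields $s_\gamma(x)\ge c_\gamma/(\mathcal{C}\sqrt n)$, completing the lemma and, in view of Lemma~\ref{lem:F-s}, also Lemma~\ref{lem:LBonF}.
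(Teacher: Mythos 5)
Your proof is correct and takes essentially the same approach as the paper: exploit the inscribed tangent ball $B$ of radius $1/\mathcal{C}$, note that a displacement $u$ from $x$ lands in $B$ precisely when $u\cdot n_x \ge \mathcal{C}|u|^2/2$, and conclude that at scale $t = c_\gamma/(\mathcal{C}\sqrt n)$ a $\gamma$-fraction of $x+tB^n$ lies in $B\subseteq K$. The paper lower bounds the intersection by an explicit halfspace cut of the small ball rather than your radial integral over spherical caps, and is less explicit about uniformity in $n$, but the geometric content and the conclusion are identical.
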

\begin{proof}
Fix $x\in\partial K$. In view of the definition of $s_\gamma$, it suffices to show that 
\[
\vol((x+t B)\cap K) \ge \gamma \vol(t B),
\]
for $t=c_\gamma /({\cal C}\sqrt{n})$.

Let $B^{\cal C}$ be a ball with radius $1/{\cal C}$ and center in $K$ so that the tangent planes of $K$ and $B^{\cal C}$ at $x$ coincide and $B^{\cal C}$ lies in $K$.
Let $B_t$ be a ball with radius $t<1/{\cal C}$ centered at $x$.
Let $x$ be the origin of a new coordinate system, in which the center of the larger ball is $(1/{\cal C},0,\ldots,0)$.

We first characterize the points (in the new coordinate system) where the boundaries of the two balls intersect.
All of these points have the same first coordinate, namely equal to $y$ satisfying
\[
t^2 - y^2 = \frac1{{\cal C}^2} - \left(\frac 1{\cal C} - y\right)^2,
\]
and the solution is $y={\cal C} t^2/2$, and we write $H_{{\cal C} t^2/2}$ for the halfspace of all points with 
first coordinate exceeding $y$.

We now use the property that, for a unit ball $B$, the volume of all points with first coordinate exceeding $1/\sqrt{n}$ takes up a constant fraction of its volume.
Consequently, if $t=c_\gamma/({\cal C}\sqrt{n})$ for an appropriate constant $c_\gamma$, 
then ${\cal C} t^2/2 = c_\gamma t/(2\sqrt{n})$ and therefore 
$\vol ( B_{t}\cap H_{{\cal C} t^2/2}) = \gamma \vol(B_{t})$.
Upon noting that, for this choice of the radius $t$, 
\[
\vol((x+tB)\cap K) \ge \vol ( B_{t}\cap H_{{\cal C} t^2/2}) = \gamma \vol(B_t) =\gamma \vol(tB),
\]
we obtain the claim.
\end{proof}

\subsection{Overlap for points that are close}
\label{sec:overlap}

It is the aim of this subsection to prove the following lemma, 
which states that the transition probabilities for points that are sufficiently close must 
be similar. This is formalized as having (total variation)
`overlap' at least $1/\kappa>0$, for some $\kappa$.

\begin{lemma}
\label{lem:overlapSB}
Let $u,v\in \partial K$, and let $n$ be large. If
\[
|u-v|< \frac{1}{100\sqrt n}\max(F(u),F(v)),
\]
then we have
\[
\|P_u-P_v\|_{\text{TV}}\le 1-\frac 1{\kappa},
\]
where $\kappa$ is an absolute constant determined in the proof.
\end{lemma}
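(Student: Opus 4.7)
My plan is to directly lower-bound the total variation overlap
\[
1-\|P_u-P_v\|_\TV \;=\; \int_{\partial K}\min(p_u(y),p_v(y))\,dy,
\]
using the explicit density formula \eqref{one-step-formula}. The strategy is to find a set $A\subseteq\partial K$ carrying a constant fraction of the mass of both $P_u$ and $P_v$ on which the ratio $p_v/p_u$ stays within an absolute constant $C$; then $\int\min(p_u,p_v)\ge C^{-1}P_u(A)\ge 1/\kappa$.

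\textbf{Step 1 (typical set).} WLOG assume $F(u)\le F(v)$ and write $F=F(v)$, $\delta=v-u$, so $\|\delta\|<F/(100\sqrt n)$. Let $A$ consist of the $y\in\partial K$ satisfying (a) $\|y-u\|,\|y-v\|\ge c_1 F$; (b) all four cosines $\cos\phi_{uy},\cos\phi_{yu},\cos\phi_{vy},\cos\phi_{yv}$ are $\ge c_2/\sqrt n$; and (c) $|\delta\cdot(y-u)|/\|y-u\|^2\le c_3/n$. Property (a) follows from the definition of $F$ (and a matching upper-tail bound on step length). Property (b) follows from Lemma~\ref{lem:volumelemma}, which says that $\sqrt n\,\cos\phi_{uy}$ is Rayleigh-like in the limit, so excluding the lower tail costs little mass. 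Property (c) relies on the concentration of a random halfsphere direction: under $P_u^{\cos}$ the vector $w=(y-u)/\|y-u\|$ projects onto the fixed direction $\delta$ with magnitude $O(\|\delta\|/\sqrt n)$ with high probability, giving $|\delta\cdot w|/\|y-u\|=O(1/n)$ on (a). Choosing the $c_i$ appropriately, $P_u(A),P_v(A)\ge 3/4$.

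\textbf{Step 2 (density ratio on $A$).} The ratio factors as
\[
\frac{p_v(y)}{p_u(y)}=\frac{\cos\phi_{vy}\cos\phi_{yv}}{\cos\phi_{uy}\cos\phi_{yu}}\cdot\left(\frac{\|u-y\|}{\|v-y\|}\right)^{n-1}.
\]
For the cosine factors, the hypothesis $\|\delta\|\le F/(100\sqrt n)$ combined with Lemma~\ref{lem:curvature} gives $\|\delta\|\ll 1/(\mathcal C\sqrt n)$, so the Lipschitz estimate $|n_u-n_v|\le\mathcal C\|\delta\|$ and the angular change of $(y-\cdot)/\|y-\cdot\|$ between $u$ and $v$ are both $o(1/\sqrt n)$; combined with (b) this gives a constant-factor bound on each cosine ratio. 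For the distance factor, expand
\[
\log\frac{\|v-y\|}{\|u-y\|}=-\frac{\delta\cdot(y-u)}{\|y-u\|^2}+O\!\left(\frac{\|\delta\|^2}{\|y-u\|^2}\right),
\]
so by (a) and (c) the exponent $(n-1)\log(\|v-y\|/\|u-y\|)$ is $O(1)$.

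\textbf{Main obstacle.} The subtle point is the $(n-1)$th-power distance factor: naively, $\|\delta\|/\|u-y\|=\Theta(1/\sqrt n)$ and raising $1+\Theta(1/\sqrt n)$ to the power $n-1$ gives $e^{\Theta(\sqrt n)}$, which would ruin the ratio. The rescue is (c): what controls the exponent to leading order is not $\|\delta\|/\|u-y\|$ but the signed projection $\delta\cdot w/\|u-y\|$, and because $w$ is a concentrated random unit vector from $P_u^{\cos}$, its inner product with the fixed vector $\delta$ is typically smaller by another factor of $\sqrt n$. Establishing this concentration quantitatively---via a standard spherical cap estimate applied after separating $\delta$ into its tangential part (of size $\approx\|\delta\|$) and its normal part (of size $O(\mathcal C\|\delta\|^2)$ by bounded curvature)---is the key technical step; once it is in hand, properties (a)--(c) combine with the cosine estimate to yield $p_v(y)/p_u(y)\in[1/C,C]$ uniformly on $A$. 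Hence $\int\min(p_u,p_v)\ge (1/C)P_u(A)\ge 3/(4C)=:1/\kappa$, which is the claim.
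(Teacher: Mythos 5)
Your overall strategy --- construct a set $A$ carrying a constant fraction of the $P_u$-mass on which the density ratio $p_v/p_u$ is bounded by an absolute constant, and hence $\int\min(p_u,p_v)\ge P_u(A)/C$ --- is the same as the paper's. Your properties (a) and (c) correspond to the exclusion of the paper's sets $A_1$ (short steps) and $A_2$ (steps not nearly orthogonal to $u-v$), and your ``main obstacle'' paragraph correctly identifies that the $(n-1)$-th power of the distance ratio is the delicate term, tamed exactly as in Claim~\ref{claim:distance}. However, two steps in your sketch contain genuine gaps.

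\textbf{The cosine-ratio via a Lipschitz bound on normals is invalid.} You claim $\|\delta\|<F/(100\sqrt n)$ together with Lemma~\ref{lem:curvature} gives $\|\delta\|\ll 1/(\mathcal{C}\sqrt n)$, hence $|n_u-n_v|\le\mathcal{C}\|\delta\|=o(1/\sqrt n)$. This inference runs the wrong way: Lemma~\ref{lem:curvature} gives a \emph{lower} bound $F\ge c_\gamma/(\mathcal{C}\sqrt n)$, so the hypothesis places no upper bound on $\|\delta\|$ relative to $1/(\mathcal{C}\sqrt n)$. Since $F$ can be as large as $D$, one can have $\mathcal{C}\|\delta\|$ of order $\mathcal{C}D/\sqrt n$, which is $\Omega(1)$ as soon as $\mathcal{C}D\gtrsim\sqrt n$; then $n_u$ and $n_v$ may point in quite different directions, and the Lipschitz estimate is useless. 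The paper avoids this entirely: the exceptional set $A_3$ constrains both $\cos\phi_{ux}$ and $\cos\phi_{vx}$ to the window $(c_1/\sqrt n,c_2/\sqrt n)$, and its $P_u$-measure is controlled by rewriting $n_v\cdot(x-v)$ in terms of $n_v\cdot(x-u)$ (the \emph{same} fixed normal $n_v$, shifted base point) and invoking the cap estimate of Lemma~\ref{lem:volumelemma} in the direction $n_v$; no comparison between $n_u$ and $n_v$ is ever made.

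\textbf{The lower bound on the incoming cosines does not follow from Lemma~\ref{lem:volumelemma}.} Your property (b) demands $\cos\phi_{yu},\cos\phi_{yv}\ge c_2/\sqrt n$, attributed to Lemma~\ref{lem:volumelemma}. That lemma describes the law of the \emph{outgoing} component $n_x\cdot(y-x)$ under $P_x^{\cos}$, i.e.\ the angle at the \emph{source} with respect to the \emph{source's} normal. It says nothing about $\phi_{yu}$, the angle at the \emph{landing point} with respect to the normal $n_y$, which is determined by the geometry of $\partial K$ near $y$. Establishing that, under $P_u$, the incoming cosine is bounded below by $\Theta(1/\sqrt n)$ on a large-measure set requires a separate geometric argument: the paper's set $A_4$ and Claim~\ref{claim:phi-xv} accomplish this by combining the $A_1$-exclusion of short steps, the cone $\mathcal{C}(x)$ generated by the landing direction, and a sine-rule computation using the tangent ball of radius $1/\mathcal{C}$ to obtain $\nu\ge (c_2-c_1)/(\mathcal{C}\sqrt n)$. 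This is where the bounded-curvature hypothesis enters the single-step analysis in an essential way, and it is absent from your sketch. Without it, the asserted $P_u(A)\ge 3/4$ and the uniform bound on $p_v/p_u$ over $A$ are both unjustified.
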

\begin{proof}
Let $u,v \in \partial K$ be as in the hypothesis of the lemma with $F(u) \ge F(v)$.
Our main idea is to compare the transition densities of from $u$ and $v$ on a set of full measure.
We first introduce four subsets $A_1,\ldots,A_4$ of $\partial K$ which we exclude from this comparison.

{\bf The set $A_1$.}
Let $A_1$ be the subset of $\partial K$ that is close to $u$, i.e., 
\[
A_1 = \{ x \in \partial K \, : \, |u-x| \le F(u) \}.
\]
Note that $P_u(A_1) = 1/128$ by definition of $F(u)$.

{\bf The set $A_2$.}
Next we define $A_2$ as the subset of points that are far from being orthogonal to $[u,v]$, the line through $u$ and $v$ (interpreting $u$ as the origin):
\[
A_2 = \left\{x \in \partial K \, : \, |(x-u)^T(v-u)| \ge \frac{3}{\sqrt{n}}|x-u||v-u|\right\}.
\]
\begin{claim}
For large enough $n$, we have
$P_u(A_2)\le 1/64$.  
\end{claim}
To prove this claim, consider the two-dimensional plane consisting of $u$, $v$, and $n_u$. 
Recall that $S_u$ stands for the unit sphere centered at $u$.
Define
$C_y=\{x\in S_u: |y\cdot  (x-u)|/|y| \ge 3/\sqrt{n}\}$ as the union of two caps centered around 
the line determined by $y$.
Note that $x\in C_{v}$ if and only if either $x\in A_2$ or $-x\in A_2$, and that the latter are mutually exclusive statements.
Thus we have $P_u(A_2) = \tilde P^{\cos}_u(C_v) \le \sup_{y\in S_u} \tilde P^{\cos}_u(C_y)$, and the supremum is attained for $y=n_u$ 
due to the form of the density of $\tilde P^{\cos}_u$.
Therefore, as $n\to\infty$, we have by Lemma~\ref{lem:volumelemma} that
\[
P_u(A_2)=\tilde P^{\cos}_u(C_{n_u}) \to \int_3^\infty s\exp(-s^2/2) ds = \exp(-3^2/2).
\]
The right-hand side is less than $1/64$.

{\bf The set $A_3$.}
Let $A_3$ be given by
\[
A_3 = \left\{x \in \partial K\, : \sqrt{n} \cos(\phi_{ux}) \not \in (c_1,c_2)\right\}\cup
\left\{x \in \partial K\, : \sqrt{n} \cos(\phi_{vx}) \not \in (c_1,c_2)\right\},
\]
where $c_1$ and $c_2$ satisfy 
\begin{eqnarray*}
\lim_{n\to\infty} P_u(x \in \partial K\, : \sqrt{n} \cos(\phi_{ux}) <c_1) &=& 1/64 \\
\lim_{n\to\infty} P_u(x \in \partial K\, : \sqrt{n} \cos(\phi_{ux}) >c_2) &=& 1/64.
\end{eqnarray*}
Note that the limits on the left-hand side are equal to $1-\exp(-c_1^2/2)$ and
$\exp(-c_2^2/2)$, respectively, by Lemma~\ref{lem:volumelemma}.
We can thus set 
$c_1 = \sqrt{-2\log(63/64)}\approx 0.18$ and $c_2= \sqrt{-2\log(1/64)} \approx 2.88$.

\begin{claim}
\label{claim:badoutgoingangles}
For large enough $n$, we have $P_u(A_1^c\cap A_3)\le 30/64$  
and therefore $P_u(A_1\cup A_3) \le 61/128$.
\end{claim}
By definition of $c_1$ and $c_2$, it suffices to show that $P_u(x\in A_1^c: \sqrt{n}\cos(\phi_{vx})\not\in(c_1,c_2))\le 28/64$.
We first show that $P_u(x\in A_1^c: \sqrt{n}\cos(\phi_{vx})>c_2) \le \exp(-(c_2-1/100)^2/2)$.
Since $\cos(\phi_{vx}) = n_v\cdot (x-v)/|x-v|$, we have to bound $P_u(x\in A_1^c: n_v\cdot (x-v)>c_2|x-v|/\sqrt{n})$.
The key ingredient is the following observation: if $n_v\cdot  (x-v)\ge c_2|x-v|/\sqrt{n}$ and $|x-u|> F(u)$, then 
\[
n_v\cdot (x-u) \ge \frac{c_2}{\sqrt{n}}|x-v| + n_v\cdot (v-u) \ge \frac{c_2}{\sqrt{n}}|x-u| - \left(1+\frac{c_2}{\sqrt{n}}\right)|v-u|
> \left(\frac{c_2-1/100}{\sqrt{n}} - \frac{c_2}{100n}\right)|x-u|,
\]
where the last inequality uses $|v-u|< F(u)/(100\sqrt{n})\le |x-u|/(100\sqrt{n})$, which holds since $x\in A_1^c$.
We thus deduce that, for large enough $n$,
\begin{equation}
\label{eq:boundA1cA3}
P_u(x\in A_1^c: n_v\cdot (x-v)>c_2|x-v|/\sqrt{n})\le P_u\left(x\in \partial K: n_v\cdot (x-u) > \left(\frac{c_2-1/50}{\sqrt{n}}\right)|x-u|\right).
\end{equation}
We now bound this probability.
For a unit vector $y$, write $C'_y = \{x\in S_u: y\cdot (x-u)\ge (c_2-1/50)/\sqrt{n}\}$, which is a cap of $S_u$ with `center' $y$.
The right-hand side of (\ref{eq:boundA1cA3}) equals $P^{\cos}_u(C'_{n_v})$. Due to the form of the density of $P^{\cos}_u$, we have
$P^{\cos}_u(C'_{n_v}) \le \sup_{y\in S_u} P^{\cos}_u(C'_{y}) = P^{\cos}_u(C'_{n_u})\to \exp(-(c_2-1/50)^2/2)$.

We next bound $P_u(x\in A_1^c: \sqrt{n}\cos(\phi_{vx})<c_1)$, which is equal to
\[
P_u(x\in A_1^c: 0\le n_v\cdot (x-v)\le c_1|x-v|/\sqrt{n}).
\]
We use a similar argument as before.
If $x\in A_1^c$ and $0\le n_v\cdot (x-v)\le c_1|x-v|/\sqrt{n}$, we have
\[
n_v\cdot (x-u) \ge n_v\cdot (v-u) \ge -|v-u|> -|x-u|/(100\sqrt{n})
\]
and
\[
n_v\cdot (x-u) \le \frac{c_1}{\sqrt{n}} |x-v| + n_v\cdot (v-u) \le \frac{c_1}{\sqrt{n}}|x-u| + \left(1+\frac{c_1}{\sqrt{n}}\right)|v-u|
< \left(\frac{c_1+1/50}{\sqrt{n}}\right)|x-u|.
\]
For a unit vector $y$, write $C''_y = \{x\in S_u: -1/(100\sqrt{n})\le y\cdot (x-u)\le (c_1+1/50)/\sqrt{n}\}$.
We have now shown that 
\[
P_u(x\in A_1^c: \sqrt{n}\cos(\phi_{vx})<c_1)\le P^{\cos}_u(C''_{n_v}).
\]
Note that by Lemma~\ref{lem:volumelemma}, we have
\[
\tilde P^{\unif}_u (C''_{n_v}) \to \frac{1}{\sqrt{2\pi}} {\int_{-1/100}^{c_1+1/50} \exp(-y^2/2) dy}{}.
\]
Call the ratio on the right-hand side $\rho$, and we find that $\rho\approx 0.082$.
Application of Lemma~\ref{lem:volumelemma} (twice) yields
\[
P^{\cos}_u (C''_{n_v}) \le \sup_{D:\tilde P^{\unif}_u(D)=\rho} P^{\cos}_u(D) =P^{\cos}_u (\{x\in S_u: n_u\cdot  (x-u)\ge \Psi^{-1}(\rho)/\sqrt{n}\}) = \exp(-\Psi^{-1}(\rho)^2/2),
\]
where $\Psi(x) =\int_x^\infty \exp(-y^2/2)dy/\sqrt{2\pi}$.
We conclude that 
\[
\limsup_{n\to\infty} P_u(A_1^c\cap A_3) \le \frac{2}{64}+\exp(-(c_2-1/50)^2/2) + \exp(-\Psi^{-1}(\rho)^2/2).
\]
It is readily verified that the right-hand side approximately equals $0.40$, and the first part of Claim~\ref{claim:badoutgoingangles} follows.
For the second part, note that $P_u(A_1\cup A_3) \le P(A_1) + P(A_1^c\cap A_3)$.

{\bf The set $A_4$.}
For the following argument, we interpret $u$ as the origin of our coordinate system, 
so that (for instance) cones are defined with respect to $u$.
For $x\in \partial K$, let $\mathcal C(x)$ be the cone generated by the orthogonal projection of $x$ on
the hyperplane $\{z: n_u\cdot (z-u) = 0\}$ and the normal $u+n_u$ at $u$.
Write $\xi(x)$ for
the angle between the point in $\mathcal C(x)\cap (u+F(u) B^n) \cap \partial K$ 
and the aforementioned hyperplane, see Figure~\ref{fig:overlap}.
\begin{figure}
\begin{center}
\includegraphics[height=60mm]{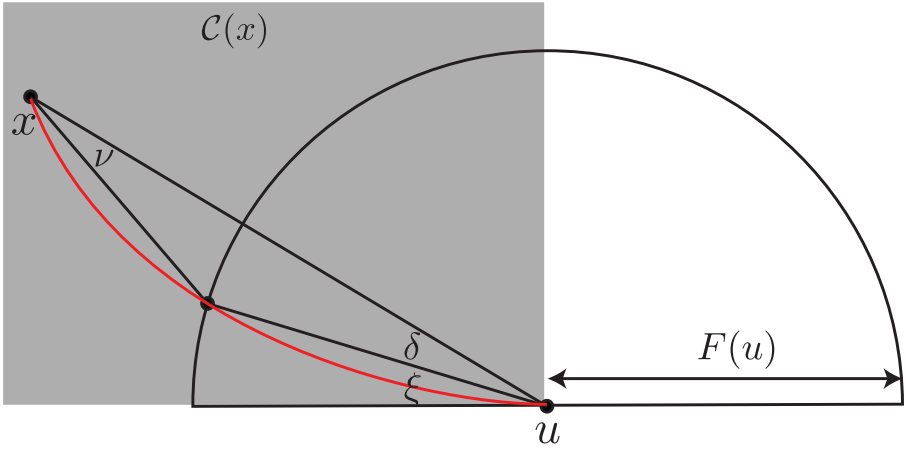}
\end{center}
\caption{Incidence angles and the cone $\mathcal C(x)$. 
Part of the boundary $\partial K$ is depicted in red.}
\label{fig:overlap}
\end{figure}
Write 
\[
A_4 = \{x\in\partial K:\xi(x)\ge c_1/\sqrt{n}\}
\]
and $B = \{ x\in\partial K: \sqrt{n} \cos(\phi_{ux})<c_1\}$. 
Since $P_u(B) = 1/64$ and $P_u(A_1) = 1/128$, we find that 
\[
P_u(A_1^c|B) = 1 - P_u(A_1\cap B)/P_u(B) \ge 1-P_u(A_1)/P_u(B) = 1/2. 
\]
(The last equality only holds asymptotically as $n\to\infty$, but 
we ignore such issues in the remainder of this proof for purposes of readability.)

The angle $\xi(x)$ is only a function of $x$ through $\mathcal C(x)$, i.e.,
$\xi(x)$ is determined once $\mathcal C(x)$ is given.
Interpreting $\mathcal C$ and $\xi$ as random variables on the sample space $\partial K$,
the distribution of $\mathcal C$ under $P_u$
is the uniform distribution over all such cones. 
Since the distribution of $\mathcal C$ under $P_u(\cdot|B)$
is also uniform, the distribution of $\xi$ is the same under $P_u$ and under $P_u(\cdot|B)$.
We conclude that
\[
P_u(A_4^c) = P_u(\xi<c_1/\sqrt{n})= P_u(\xi<c_1/\sqrt{n} | B) \ge P_u (A_1^c|B)\ge 1/2,
\]
so that $P_u(A_4) \le 1/2$.

{\bf A set on which $P_v$ majorizes $P_v$.}
Let $A = \partial K \setminus A_1 \setminus A_2 \setminus A_3\setminus A_4$. Then we have
\[
P_u(A) \ge 1 - \frac 1{128} - \frac 1{64} -\frac{30}{64}-\frac 12= \frac{1}{128}. 
\]

We will show that for any subset $S \subseteq A$, we have
\begin{equation}
\label{eq:PvgePu}
P_v(S) \ge \frac{128P_u(S)}{\kappa},
\end{equation}
where $\kappa$ is some positive constant.
This implies that for any subset $S$ of $\partial K$, we have
\begin{eqnarray*}
P_u(S) - P_v(S) &\le& P_u(S) - P_v(S\setminus A_1\setminus A_2\setminus A_3\setminus A_4) \\
&\le& P_u(S) - \frac {128}{\kappa} P_u(S\setminus A_1\setminus A_2\setminus A_3\setminus A_4) \\
&\le& P_u(S) - \frac {128}{\kappa} [P_u(S)-P_u(A_1\cup A_2\cup A_3\cup A_4)] \\
&\le& P_u(S) - \frac {128}{\kappa} [P_u(S)-127/128] \\
&\le& 1-\frac{1}{\kappa},
\end{eqnarray*}
and therefore we obtain the conclusion of the lemma from (\ref{eq:PvgePu}).  

We prove (\ref{eq:PvgePu}) using the formula for the one-step distribution from $v$ as given in (\ref{one-step-formula}):
\[
P_v(S) = \frac{\pi^{(n-1)/2}}{\vol(\partial K) \Gamma((n+1)/2)}\int_S \frac{\cos(\phi_{vx})\cos(\phi_{xv})}{|v-x|^{n-1}} dx,
\]
and we compare the three terms in the integrand with the corresponding quantities for $v$ replaced with $u$.
This rests on the following three claims for $x \in A$, which show that we can take $\kappa>0$ to satisfy
\[
\frac {128}{\kappa} = e^{-7/2} \frac{c_1}{c_2} \left(1-\frac1{100(c_2-c_1)}\right).
\]
\begin{claim}\label{claim:distance} 
For $x\in A$, we have 
\[
|v-x| \le \left(1+ \frac{7}{2n}\right)|u-x|.
\] 
\end{claim}

To prove Claim~\ref{claim:distance}, we note that for $x \in A$, 
\[
|x-u| \ge F(u) \ge \sqrt{n}|u-v|
\]
and 
\[
|(x-u)^T(v-u)| \le \frac{3}{\sqrt{n}}|x-u||v-u|.
\]
Using these, we deduce that
\begin{eqnarray*}
|x-v|^2 &=& |x-u|^2 + |u-v|^2 + 2(x-u)^T(u-v) \\
&\le& |x-u|^2 + |u-v|^2 + \frac{6}{\sqrt{n}}|x-u||u-v|\\
&\le& |x-u|^2 + \frac{1}{n}|x-u|^2 + \frac{6}{n}|x-u|^2\\
&\le& \left(1+\frac 7n\right)|x-u|^2,
\end{eqnarray*}
which completes the proof of Claim~\ref{claim:distance}.

\begin{claim}\label{claim:phi-vx}
For $n$ large enough and $x\in A$, we have 
\[
\frac{\cos(\phi_{vx})}{\cos(\phi_{ux})} \ge \frac{c_1}{c_2}.
\]
\end{claim}

Claim~\ref{claim:phi-vx} immediately follows upon noting that for $x\in A$, since $x\not\in A_3$,
$\sqrt{n} \cos(\phi_{vx}) \ge c_1$ and $\sqrt{n} \cos(\phi_{ux}) \le c_2$; 
therefore $\cos(\phi_{ux})$ and $\cos(\phi_{vx})$ are within a factor of $c_2/c_1$. 

\begin{claim}\label{claim:phi-xv}
For $n$ large enough and $x\in A$, we have 
\[
\frac{\cos(\phi_{xv})}{\cos(\phi_{xu})} \ge 1-\frac 1{100(c_2-c_1)}.
\]
\end{claim}

To prove Claim~\ref{claim:phi-xv}, we need to derive a lower bound on
$\cos(\phi_{xv})/\cos(\phi_{xu})$. 
Fixing $u$, $\cos(\phi_{xv})/\cos(\phi_{xu})$ achieves its lowest possible value when $v$ lies in $\mathcal C(x)$ with the highest possible angle with the inward normal at $x$.
Henceforth we consider this case.
Write $\alpha=\phi_{xv}-\phi_{xu}$, and note that $\alpha\le 1/({100C\sqrt{n}})$ since $|u-v|\le F(u)/(100\sqrt{n})$. 

Referring to Figure~\ref{fig:overlap},
we next argue that $\nu\ge (c_2-c_1)/(C\sqrt{n})$.
From the sine rule we get $\sin(\delta+\nu) = C\sin(\nu)$, so that 
$\cot(\nu) = (C-\cos(\delta))/\sin(\delta)\le C/\sin(\delta) $. 
Since $x\in A$, we have $\delta\ge (c_2-c_1)/\sqrt{n}$
and therefore $\tan(\nu) \ge (c_2-c_1)/(C\sqrt{n})$
and thus $\nu\ge (c_2-c_1)/(C\sqrt{n})$.

We conclude that
\[
\frac{\cos(\phi_{xv})}{\cos(\phi_{xu})} \ge \frac{\sin(\nu-\alpha)}{\sin(\nu)} \ge 
\frac{\sin((c_2-c_1-1/100)/(C\sqrt{n}))}{\sin((c_2-c_1)/(C\sqrt{n}))} \ge 1-\frac 1{100(c_2-c_1)}>0,
\]
where we use that $\sin(\nu-\alpha)/\sin(\nu)$ is 
increasing in $\nu$ and decreasing in $\alpha$.
Claim~\ref{claim:phi-xv} follows.

This concludes the proof of Lemma~\ref{lem:overlapSB}.
\end{proof}

\section{Conductance}

It is the aim of this section to prove our conductance bound in Proposition~\ref{prop:condSB}.
Apart from the single-step analysis of the previous section, 
a key ingredient is a certain isoperimetric inequality for the boundary of a convex body.
Such inequalities have been studied for several decades, see for instance \cite{MR0397619}.
We need an `integrated' form of this inequality, and we include a proof showing how this lemma follows 
from a classical isoperimetric inequality.
For the state-of-the-art in this area, we refer to the recent work of E.~Milman~\cite{MR2858533}.

\begin{lemma}
\label{lem:milman}
Let $K$ be a convex body in $\Rbb^n$. Suppose $\partial K$ is partitioned into
measurable sets $S_1,S_2,S_3$. We then have, for some constant $c>0$,
\[
\vol(S_3)\ge \frac cD \, d(S_1,S_2) \min(\vol(S_1),\vol(S_2)),
\]
where $d$ denotes the geodesic distance on $\partial K$.
\end{lemma}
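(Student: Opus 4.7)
The plan is to derive the integrated inequality from its differential (``Cheeger'') version via the coarea formula applied to the geodesic distance to $S_1$. In other words, I will slice $\partial K$ by level sets of $f(x)=d(x,S_1)$, bound the surface area of each slice using the classical isoperimetric inequality on $\partial K$, and then integrate.

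Concretely, set $h=d(S_1,S_2)$ and let $f:\partial K\to\Rbb_+$ be the geodesic distance to $S_1$. This is $1$-Lipschitz on the Riemannian manifold $\partial K$, so $|\nabla f|=1$ almost everywhere. For $t\in(0,h)$ define the sublevel set $U_t=\{x\in\partial K: f(x)\le t\}$. By construction $S_1\subseteq U_t$ and $S_2\subseteq \partial K\setminus U_t$, and moreover the level set $\{f=t\}$ is disjoint from $S_1$ (since $t>0$) and from $S_2$ (since $t<h$), hence lies inside $S_3$. Therefore the ``tube'' $\{0<f<h\}$ is contained in $S_3$.

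Next I invoke the classical isoperimetric inequality on the boundary of a convex body (the differential form, e.g.\ the version from \cite{MR2858533} or its precursors): for any measurable $U\subseteq \partial K$,
\[
\mathcal{H}^{n-2}(\partial U) \ge \frac{c}{D}\,\min(\vol(U),\vol(\partial K\setminus U)).
\]
Apply this with $U=U_t$. Since $\vol(U_t)\ge \vol(S_1)$ and $\vol(\partial K\setminus U_t)\ge \vol(S_2)$, we get
\[
\mathcal{H}^{n-2}(\{f=t\})\ge \frac{c}{D}\,\min(\vol(S_1),\vol(S_2))\qquad\text{for every }t\in(0,h).
\]
Finally, by the coarea formula applied to the $1$-Lipschitz function $f$ on the manifold $\partial K$,
\[
\vol(S_3) \ge \vol(\{0<f<h\}) = \int_0^{h} \mathcal{H}^{n-2}(\{f=t\})\,dt \ge \frac{c\,h}{D}\,\min(\vol(S_1),\vol(S_2)),
\]
which is the claim since $h=d(S_1,S_2)$.

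The main technical obstacle is marshalling the differential inequality and coarea formula on the curved manifold $\partial K$ with the correct regularity; the bounded-curvature hypothesis on $K$ assumed throughout the paper already guarantees enough smoothness for $\partial K$ to be a Riemannian submanifold on which both the $(n-2)$-Hausdorff measure and the Lipschitz/coarea machinery apply without modification, so the argument is short once the right classical inequality is quoted. A minor bookkeeping point is checking that $\{f=t\}\subset S_3$ for \emph{almost every} $t\in(0,h)$, which is what is needed for the integral bound; this follows from the disjointness argument above together with the fact that the exceptional set of critical values of $f$ has measure zero by Sard-type arguments on Lipschitz functions.
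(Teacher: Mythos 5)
Your proof is correct and rests on the same core idea as the paper's: foliate $\partial K$ by level sets of the geodesic distance $f=d(\cdot,S_1)$, apply the differential isoperimetric inequality on each slice, and integrate from $0$ to $d(S_1,S_2)$, using that the intermediate region lies in $S_3$. The technical route differs. You invoke the Riemannian coarea formula, which requires $|\nabla f|=1$ almost everywhere and the identification of $\{f=t\}$ with the reduced boundary of the sublevel set $U_t$ for a.e.\ $t$; these are true for distance functions (the cut locus is null, and Lipschitz level sets are $(n-2)$-rectifiable for a.e.\ $t$), but they are exactly the sort of regularity facts one has to be careful with. The paper deliberately sidesteps coarea by stating the differential inequality in terms of the Minkowski exterior content $\mu^+(A)=\liminf_{\epsilon\downarrow 0}(\vol(A^\epsilon)-\vol(A))/\epsilon$, then showing by hand that $x\mapsto\vol(S_1^x)$ is continuous and monotone, and finally using Fatou's lemma to get $\vol(S_1^x)-\vol(S_1)\ge\int_0^x\mu^+(S_1^\eta)\,d\eta$. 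This way the only analytic tool needed is Fatou, at the cost of a slightly fussier argument. Your version is shorter once the coarea facts are granted; the paper's version is more self-contained. One small caution: the lemma is stated for a general convex body, not under the bounded-curvature hypothesis, so you should not lean on that hypothesis to justify the regularity needed for coarea -- rather appeal to the standard facts about Lipschitz functions and sets of finite perimeter, which hold without it.
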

\begin{proof}
Recall the definition of the $\epsilon$-extension $A^\epsilon$ of a set $A$ with respect
to the geodesic metric.
Abusing notation, we write $A^\epsilon$ for $A^\epsilon\cap S$.
$\mu^+$ denotes Minkowski's exterior boundary measure, defined through
$
\mu^+(A) = \liminf_{\epsilon\downarrow 0} (\vol(A^\epsilon)-\vol(A))/\epsilon.
$
The isoperimetric constant for manifolds with nonnegative Ricci curvature can be bounded by $c/D$ for some constant $c$ (e.g., \cite{MR2858533}). This yields that, for any $A\subseteq S$,
\[
\mu^+(A) \ge \frac cD \min(\vol(A), \vol(S)-\vol(A)).
\]
For $\epsilon<d(S_1,S_2)$, the inequalities $\vol(S_1^\epsilon)\ge \vol(S_1)$ and $\vol(S)-\vol(S_1^\epsilon)\ge \vol(S_2)$ imply that
\begin{equation}
\label{eq:boundminima}
\min(\vol(S_1^\epsilon), \vol(S)-\vol(S_1^\epsilon)) \ge \min(\vol(S_1),\vol(S_2)).
\end{equation}
The function $x\mapsto \vol(S_1^x)$ is nondecreasing and continuous on $(0,\infty)$.
To see why it is continuous,
let $x>0$ and suppose without loss of generality that $S^x_1$ contains
an $r$-neighborhood $B$ of the origin for some $r>0$. Then
$S_1^{x+\epsilon}=S_1^x+(\epsilon/r) B\subseteq (1+\epsilon/r)S_1^x$, so that
$\vol(S_1^{x})\le \vol(S_1^{x+\epsilon})\le (1+\epsilon/r)^n \vol(S_1^x)$.
Consequently, we have for $x>0$,
\begin{eqnarray*}
\vol(S_1^x) - \vol(S_1) &\ge&
\liminf_{\epsilon\downarrow0}
\left[\frac 1\epsilon\int_x^{x+\epsilon}  \vol(S_1^{\eta}) d\eta
- \frac 1\epsilon\int_0^\epsilon \vol(S_1^{\eta}) d\eta\right]\\&=&
 \liminf_{\epsilon\downarrow0}
\int_0^x \frac 1\epsilon [\vol(S_1)^{\eta+\epsilon} - \vol(S_1)^{\eta}] d\eta
\ge \int_0^x \mu^+(S_1^\eta)d\eta,
\end{eqnarray*}
where the last inequality follows from Fatou's lemma.
Combining the above, we deduce from (\ref{eq:boundminima}) that
\begin{eqnarray*}
\vol(S_3)&\ge& \vol(S_1^{d(S_1,S_2)})-\vol(S_1)\ge
\int_0^{d(S_1,S_2)} \mu^+(S_1^\epsilon) d\epsilon \\&\ge&
\frac cD\int_0^{d(S_1,S_2)} \min(\vol(S_1^\epsilon), \vol(S)-\vol(S_1^\epsilon)) d\epsilon\ge
\frac cD d(S_1,S_2) \min(\vol(S_1),\vol(S_2)),
\end{eqnarray*}
as required.
\end{proof}

We are now ready to prove our conductance bound of Proposition~\ref{prop:condSB}, 
which concludes the proof of our main result.

\begin{proof*}{Proposition~\ref{prop:condSB}}
This part of the proof of Theorem~\ref{thm:main} is quite standard, but we include details here for completeness.

Let $K=S_1\cup S_2$ be a partition into measurable sets.
We will prove that
\begin{eqnarray}
\int_{S_1} P_x(S_2)\,dx &\ge&
\frac{c}{{\cal C} nD} \min\{\vol(S_1), \vol(S_2)\}. \label{CONDUC}
\end{eqnarray}
In this proof, the constant $c$ can vary from line to line. The constant $\kappa$ 
stands for the constant from Lemma~\ref{lem:overlapSB}.
Consider the points that are deep inside these sets, i.e., unlikely to
jump out of the set: 
\[
S_1' = \left\{x \in S_1: P_x(S_2) < \frac{1}{2\kappa}\right\},\quad\quad
S_2' = \left\{x \in S_2: P_x(S_1) < \frac{1}{2\kappa}\right\}.
\]
Set $S_3' = K \setminus S_1'\setminus S_2'$.

Suppose $\vol(S_1') < \vol(S_1)/2$. Then
\[
\int_{S_1} P_x(S_2)\,dx \ge \frac{1}{2\kappa} \vol(S_1\setminus
S_1') \geq \frac{1}{4\kappa}\vol(S_1)
\]
which proves (\ref{CONDUC}).

So we can assume that $\vol(S_1')\geq \vol(S_1)/2$ and similarly
$\vol(S_2') \geq \vol(S_2)/2$. For any $u \in S_1'$ and $v \in S_2'$,
\[
\|P_u-P_v\|_\TV \geq 1 - P_u(S_2) - P_v(S_1) > 1 - \frac{1}{\kappa}.
\]
Thus, by Lemma~\ref{lem:overlapSB}, we must then have
\[
d(u,v) \geq \frac{1}{100\sqrt{n}} \max\{F(u), F(v)\}.
\]
In particular, we have $d(S_1',S_2')\ge \inf_{x\in\partial K} F(x)/(100\sqrt{n})$.

We next apply Lemma~\ref{lem:milman} to obtain
\begin{eqnarray*}
\frac{\vol(S_3')}{\min\{\vol(S_1'), \vol(S_2')\}} &\geq&
\frac{c}{D\sqrt{n}}\inf_{x\in\partial K} F(x) \\
&\ge& \frac{c}{2D\sqrt{n}}\inf_{x\in\partial K} s_\gamma(x),
\end{eqnarray*}
where the last inequality follows from Lemma~\ref{lem:F-s}.
By Lemma~\ref{lem:curvature}, this is bounded from below by $c/({\cal C}nD)$.
Therefore,
\begin{eqnarray*}
\int_{S_1} P_x(S_2)\,dx &\ge& \frac{1}{2}\cdot
\frac{1}{2\kappa}\vol(S_3')\\ 
&\ge& \frac{c}{{\cal C} nD}\min\{\vol(S_1'), \vol(S_2')\}\\
&\ge& \frac{c}{2{\cal C}nD}\min\{\vol(S_1),
\vol(S_2)\}
\end{eqnarray*}
which again proves (\ref{CONDUC}).
\end{proof*}

\section*{Acknowledgments}
ABD gratefully acknowledges the support from NSF grant CMMI-1252878. SV was partially supported by 
NSF award CCF-1217793.
We also thank the referees for their thoughtful comments, and Chang-han Rhee for helpful discussions.

\bibliographystyle{siam}
{\footnotesize
\bibliography{../../bibdb,acg}
}

\end{document}